\definecolor{Gray}{gray}{0.9}
\providecommand{\U}[1]{\protect\rule{.1in}{.1in}}
\newtheorem{theorem}{Theorem}[section]
\theoremstyle{plain}
\newtheorem{claim}{Claim}
\newtheorem{corollary}{Corollary}[section]
\newtheorem{lemma}{Lemma}[section]
\newtheorem{proposition}{Proposition}[section]
\numberwithin{equation}{section}
\theoremstyle{definition}
\newtheorem{definition}{Definition}[section]
\theoremstyle{remark}
\newtheorem{remark}{Remark}[section]
\newcommand{\note}[1]{\strut{\color{red}[#1]}}
\newcommand{\R}{\mathbb{R}}
\newcommand{\Hcal}{\mathcal{H}}
\newcommand{\BMO}{\mathrm{BMO}}
\begin{document}
\title[Variational integrals on Hessian spaces]{Variational integrals on Hessian spaces: partial regularity for critical points}
\author{Arunima Bhattacharya}
\address{Department of Mathematics, Phillips Hall\\
 the University of North Carolina at Chapel Hill, NC }
\email{arunimab@unc.edu}

\author{Anna Skorobogatova
}
\address{Department of Mathematics \\
ETH Z\"{u}rich \\ R\"{a}mistrasse 101 \\ 8092 Z\"{u}rich}
\email{anna.skorobogatova@math.ethz.ch}

 \begin{abstract}
     We develop regularity theory for critical points of variational integrals defined on Hessian spaces of functions on open, bounded subdomains of $\mathbb{R}^n$, under compactly supported variations. 
     %The critical point solves a fourth order nonlinear equation in double divergence form. 
     We show that for smooth convex functionals, a $W^{2,\infty}$ critical point with bounded Hessian is smooth provided that its Hessian has a small bounded mean oscillation (BMO). We deduce that the interior singular set of a critical point has Hausdorff dimension at most $n-p_0$, for some $p_0 \in (2,3)$. 
    We state some applications of our results to variational problems in Lagrangian geometry. Finally, we use the Hamiltonian stationary equation to demonstrate the importance of our assumption on the a priori regularity of the critical point.
 \end{abstract}

 \maketitle

	\section{ Introduction}

We consider variational integrals of the form
 \begin{equation}
\int_{\Omega}F(D^{2}u)\,dx\label{Ffunc}%
\end{equation} where $\Omega$ is an open, bounded subset of $\mathbb{R}^n.$
Any critical point of the above functional under compactly supported variations satisfies an Euler-Lagrange equation, which possesses a nonlinear fourth order structure, given by 
\begin{equation}
\int_{\Omega}  F^{ij}(D^{2}u)\eta_{ij}\,dx=0, \label{main_0}
\end{equation}
where $\eta\in C_{0}^{\infty}(\Omega)$ is an arbitrary test function, {and we are using the notation $\eta_{ij} = \partial_{ij}\eta$ and $F^{ij}(D^2 u) = \frac{\partial F}{\partial u_{ij}}(D^2 u)$}. We assume the functional in (\ref{Ffunc}) to be uniformly convex and smooth, which results in the above coefficient $F^{ij}$ to possess the following properties:
\begin{enumerate}
    \item It is smooth in the matrix entries $D^{2}u$
over a given convex region $U\subset S^{n\times n}$, where $S^{n\times n}$ denotes the space of all $n\times n$ symmetric matrices.
    \item It satisfies the Legendre ellipticity condition: there exists a constant $\Lambda>0$ such that
\begin{equation*}
\label{elliptic:Intro}\frac{\partial F^{ij}}{\partial u_{kl}}(\xi)\sigma
_{ij}\sigma_{kl}\geq\Lambda\vert \sigma\vert ^{2},\text{ $\forall$
}\sigma\text{ $\in S^{n\times n}$, $\forall \ \xi\in U$}.
\end{equation*}
\end{enumerate}
{One may equivalently replace the uniform convexity with uniform concavity, which requires $-F$ to be uniformly convex.} 

Hessian-dependent functionals appear in various areas of mathematics and related
disciplines. Examples include functionals arising in elasticity theory
and the mechanics of solids \cite{KV}, as well as the Aviles-Giga functional \cite{AG} that models phenomena from blistering to liquid crystals. Other equations that enjoy the fourth order structure are bi-harmonic functions, in addition to suitable perturbations of this, such as the conformally invariant Paneitz operator introduced in \cite{P} and the wider class of operators studied in \cite{CGY}. Further, well-known examples are extremal K\"ahler metrics, the Willmore surface equation, which is closely linked to elastic mechanics, and its generalization coming from the Canham-Helfrich energy \cite{MonSch}.\\

For second order equations in divergence form (and their nonlinear counterparts), the existence and regularity theory of solutions has been studied extensively and plays a significant role in geometric analysis, among many other fields. In comparison, the theory of double divergence form equations is largely unexplored for fourth order but remains an important developing area of geometric analysis. \\

	Before presenting the main result of this paper, we first introduce the following notation and definition. \\

 \noindent
 \textbf{Notations.} 
 Throughout this paper, $U$ denotes a convex neighborhood in $S^{n\times n}$.
 $B_r(x)$ denotes an open ball of radius $r$ centered at $x$ in $\R^n$. When the center is $0$, we often suppress the dependency on the center and simply write $B_r$. $\mathbb{S}^{n-1}$ denotes the $(n-1)$-dimensional unit sphere embedded in $\R^n$. At the risk of abusing notation, the Euclidean norm of vectors, the Hilbert-Schmidt norm of matrices, and the $n$-dimensional volume of subsets of $\R^n$ will all be denoted by $|\cdot |$. We use $\langle\cdot,\cdot\rangle$ to denote the Hilbert-Schmidt inner product between matrices. We will use $(u)_r$ to denote the average $\frac{1}{|B_r|}\int_{B_r} u$ of a given function $u\in L^1(B_r)$. Constants will be denoted by $C_0, C_1, C_2,\dots$ with dependencies given when first introduced, and subsequently suppressed. Also, note that we are implicitly assuming that $n\geq 2$. For two sets $\Omega, \ \Omega' \subset \R^n$, we write $\Omega' \Subset \Omega$ if $\Omega'$ is compactly contained in $\Omega$. We use $\mathbf{1}_E$ to denote the indicator function on a set $E$. We use the Einstein summation notation
	\begin{definition}[BMO]\label{bmo}
		We say that $f:B_1 \to \R^{n\times n}$ has bounded mean oscillation ($f\in \BMO(B_1)$) with modulus $\omega > 0$ in $B_1$ if for any ball $B\subset B_1$ we have
		\begin{align}
			\frac{1}{|B|} \int_{B}\left|f-(f)_B\right|\leq \omega\label{BMO1}
		\end{align}
		where $(f)_B$ denotes the average of $f$ over the ball $B$.
	\end{definition}

    \begin{remark}
        Note that via the John-Nirenberg inequality, condition \eqref{BMO1} implies that for any $p \in (1,\infty)$,
		\begin{align}
			\frac{1}{|B|} \int_{B}\left|f-(f)_B\right|^p\leq \bar{C}\omega\label{BMO2}
		\end{align}
    for all balls $B\subset B_1$, where $\bar{C}= \bar{C}(n,p) > 0$.
    \end{remark}
	
	Our main result is the following $\varepsilon$-regularity theorem:
	
	\begin{theorem}\label{main1}
		 Suppose that $u\in W^{2,\infty}(B_{1})$ is a critical point of
		\eqref{Ffunc} where
		$F$ is smooth and uniformly convex or uniformly concave on $U$ and $D^2u(x) \in U$ for almost every $x\in B_1$. There exists $\omega(\Lambda,n,\|D^2u\|_{L^{\infty}(B_1)})>0$ such that if $D^2u\in\BMO(B_1)$ with modulus $\omega$, then $u$ is smooth in $B_{1/2}$.
	\end{theorem}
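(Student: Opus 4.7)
The plan is to first establish Hölder continuity of $D^2 u$ via a Campanato-type excess-decay argument, and then bootstrap to $C^{\infty}$ via Schauder theory on the linearized equation.

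For each ball $B_R \subset B_1$, freeze coefficients by setting $M := (D^2 u)_R$ and letting $v$ solve the constant-coefficient fourth-order elliptic problem
\begin{equation*}
    F^{ij,kl}(M)\, v_{ijkl} = 0 \text{ in } B_R, \qquad u - v \in W_0^{2,2}(B_R).
\end{equation*}
Legendre ellipticity gives existence and uniqueness, and interior Cauchy estimates applied to $v$ (after subtracting the quadratic polynomial with Hessian $(D^2 v)_R$) yield the standard decay
\begin{equation*}
    \int_{B_r} |D^2 v - (D^2 v)_r|^2 \leq C(r/R)^{n+2}\int_{B_R} |D^2 v - (D^2 v)_R|^2, \qquad r \leq R.
\end{equation*}
Setting $h := u - v \in W_0^{2,2}(B_R)$ and testing the subtraction of the Euler--Lagrange equations for $u$ and $v$ against $h$, the second-order Taylor expansion $F^{ij}(D^2 u) = F^{ij}(M) + F^{ij,kl}(M)(u_{kl} - M_{kl}) + O(|D^2 u - M|^2)$, combined with Cauchy--Schwarz and Legendre ellipticity at $M$, yields the energy estimate $\int_{B_R}|D^2 h|^2 \leq C \int_{B_R} |D^2 u - M|^4$, with $C$ depending on $\Lambda$ and the $C^2$-norm of $F$ on the compact range of $D^2 u$. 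The John--Nirenberg inequality applied with $p=4$ to the small-BMO function $D^2 u$ of modulus $\omega$ then gives $\int_{B_R}|D^2 u - M|^4 \leq \bar{C} \omega^4 |B_R|$, so $\int_{B_R}|D^2 h|^2 \leq C \omega^4 |B_R|$.

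Combining these via the triangle inequality and the minimizing property of the average $(D^2 u)_r$ produces the key excess-decay inequality
\begin{equation*}
    \int_{B_{\tau R}} |D^2 u - (D^2 u)_{\tau R}|^2 \leq C\tau^{n+2}\int_{B_R} |D^2 u - (D^2 u)_R|^2 + C\omega^4|B_R|
\end{equation*}
for $\tau \in (0, 1)$ and $B_R \subset B_1$. I would iterate this inequality: first at a fixed center, pushing the normalized excess down to a floor of order $\omega^2$; then, using that this yields an $O(\omega^2)$ upper bound on the local BMO modulus across all centers at scales below this floor, re-applying the decay inequality with the improved modulus; and so on recursively. This drives the floor to zero super-geometrically and places $D^2 u$ in the Campanato space $\mathcal{L}^{2, n + 2\alpha'}(B_{1/2})$ for some $\alpha' \in (0,1)$, i.e., $D^2 u \in C^{0,\alpha'}(B_{1/2})$.

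With $D^2 u \in C^{0, \alpha'}$, smoothness follows by bootstrap: differentiating the Euler--Lagrange equation weakly (via difference quotients), $w := \partial_e u$ satisfies the linear fourth-order elliptic equation $[F^{ij,kl}(D^2 u)\, w_{kl}]_{ij} = 0$ with $C^{0, \alpha'}$ coefficients inheriting Legendre ellipticity from $F$; fourth-order Schauder theory then yields $w \in C^{2, \alpha'}$ and hence $u \in C^{3, \alpha'}$, and iterating with successively improved coefficient regularity gives $u \in C^{\infty}(B_{1/2})$. The main obstacle I expect is executing the iteration to extract genuine Hölder regularity---rather than merely VMO improvement---from the small BMO hypothesis; the key is to exploit the quadratic, rather than merely linear, Taylor remainder for $F^{ij}$, producing the factor $\omega^4$ (instead of $\omega^2$) in the energy estimate for $h$, and this extra smallness drives the iteration floor down to zero across all scales.
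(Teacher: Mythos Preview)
Your approach is correct and takes a genuinely different route from the paper.

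The paper proceeds by passing to the difference quotient $f=u^{h_m}$, which solves the \emph{linearized} equation $\int b^{ij,kl}f_{ij}\eta_{kl}=0$, and compares $f$ to the solution $w$ of the constant-coefficient problem with averaged coefficients $(b^{ij,kl})_r$. The error is controlled via H\"older's inequality and the BMO smallness of $b^{ij,kl}$ (inherited from that of $D^2u$ by Lipschitz dependence), but this produces a product $\|D^2v\|_{L^{q_0}}\|D^2f\|_{L^{p_0}}$ with $p_0>2$ that must be handled using higher integrability from a Gehring-type reverse H\"older inequality (Proposition~\ref{Gehring_1}) together with a delicate norm-ratio argument (Claim~\ref{cl:bdd-ratio}) before a \emph{single} application of the iteration Lemma~\ref{HanLin} yields $C^{2,\alpha}$. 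You instead freeze coefficients directly in the nonlinear equation for $u$ and exploit the \emph{quadratic} Taylor remainder of $F^{ij}$; this keeps the analysis in $L^2$ and avoids both Gehring's lemma and the ratio argument, at the price of the nested BMO-improvement iteration you describe.

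What each route buys: your argument is more elementary and makes transparent why $W^{2,\infty}$ is used (to bound $\|D^3F\|_{L^\infty}$ on the range of $D^2u$ for the quadratic remainder). The paper's detour through higher integrability, on the other hand, simultaneously yields $u\in W^{3,p_0}_{\mathrm{loc}}$ (Proposition~\ref{p:higher-int-D3}), which is precisely what is needed for the dimension estimate in Corollary~\ref{c:dim-est}; your approach does not deliver this without separate work. Your recursive iteration is correct in outline---the squaring $\omega_{k+1}\le C_1\omega_k^2$ with $R_{k+1}=R_k\omega_k^{1/\alpha}$ indeed forces $\omega_k\lesssim R_k^{\alpha'}$ for any $\alpha'<\alpha$, placing $D^2u$ in the appropriate Campanato space---but when executing it you must track the shrinking domains (losing $R_k$ at each step, with $\sum_k R_k<\infty$) and verify that the accumulated John--Nirenberg and comparison constants stay fixed so that $C_1\omega_0<1$ suffices.
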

{Note that smooth refers to $C^{\infty}$ throughout this paper.} \\

    As a consequence, we obtain the following dimension estimate on the singular set for critical points of \eqref{Ffunc}.
    \begin{corollary}\label{c:dim-est}
        Suppose that $u\in W^{2,\infty}(B_1)$ is a critical point of
		\eqref{Ffunc} where
		$F$ is smooth and uniformly convex or concave on $U$ and $D^2u(x) \in U$ for almost every $x\in B_1$. Then there exists $p_0 > 2$ such that the following holds. Let
        \[
            \Sigma(u) \coloneqq \left\{x\in B_1 : \liminf_{r\to 0} \frac{1}{r^n}\int_{B_r(x)} \left|D^2 u - (D^2 u)_{B_r(x)}\right|^{p_0} > 0 \right\},
        \]
        where $(D^2 u)_{B_r(x)}$ denotes the average of $D^2u$ over $B_r(x)$. Then $u\in C^\infty(B_1\setminus \Sigma(u))$ and
        \begin{equation}\label{e:dim}
            \dim_{\Hcal}(\Sigma(u)) \leq n-p_0.
        \end{equation}
	\end{corollary}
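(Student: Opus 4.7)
The plan is to combine Theorem~\ref{main1} with a Gehring-type higher integrability result for $D^3u$, a classical density argument for Radon measures, and a blow-up step. For the first ingredient, differentiating the Euler--Lagrange equation \eqref{main_0} (or testing it with difference quotients) and using the Legendre ellipticity, I derive a Caccioppoli-type inequality
\[
\int_{B_r(x)}|D^3u|^2\,dy \;\leq\; \frac{C}{r^2}\int_{B_{2r}(x)}\bigl|D^2u-(D^2u)_{B_{2r}(x)}\bigr|^2\,dy,\qquad B_{2r}(x)\Subset B_1,
\]
with $C=C(n,\Lambda,\|D^2u\|_{L^\infty(B_1)})$. Combined with Sobolev--Poincar\'e applied to $D^2u-(D^2u)_{B_{2r}(x)}$, this is a reverse H\"older inequality, and Gehring's self-improving lemma yields an exponent $p_0>2$ such that $D^3u\in L^{p_0}_{\mathrm{loc}}(B_1)$ with a quantitative interior bound.

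For the dimension estimate, Poincar\'e's inequality gives
\[
\int_{B_r(x)}\bigl|D^2u-(D^2u)_{B_r(x)}\bigr|^{p_0}\,dy \;\leq\; C r^{p_0}\int_{B_r(x)}|D^3u|^{p_0}\,dy,
\]
so $\Sigma(u)\subset\{x\in B_{3/4}:\ \limsup_{r\to 0}r^{-(n-p_0)}\mu(B_r(x))>0\}$, where $\mu:=|D^3u|^{p_0}\mathbf{1}_{B_{3/4}}\,dx$ is a finite Borel measure by the previous step. A standard Vitali covering argument for Radon measures then gives $\Hcal^{n-p_0}$-finiteness of the right-hand set, implying \eqref{e:dim}.

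For the smoothness, let $x_0\in B_1\setminus\Sigma(u)$, pick $r_j\to 0$ with $\epsilon_j:=(\tfrac{1}{|B_{r_j}|}\int_{B_{r_j}(x_0)}|D^2u-(D^2u)_{B_{r_j}(x_0)}|^{p_0})^{1/p_0}\to 0$, and consider the rescalings $v_j(y):=r_j^{-2}[u(x_0+r_jy)-P_j(y)]$, where $P_j$ is the quadratic with Hessian $A_j:=(D^2u)_{B_{r_j}(x_0)}$. Each $v_j$ is a critical point on $B_1$ of the shifted Lagrangian $M\mapsto F(M+A_j)$ (uniformly convex with the same $\Lambda$), with $\|D^2 v_j\|_{L^\infty(B_1)}\leq 2\|D^2u\|_{L^\infty(B_1)}$ and $\|D^2 v_j\|_{L^{p_0}(B_1)}=\epsilon_j\to 0$. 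The uniform higher integrability of the first step applied along the sequence $v_j$, together with Sobolev-type compactness and Liouville rigidity for the constant-coefficient linearized operator at $A_\infty=\lim_j A_j$, identifies the blow-up limit as affine and strengthens the convergence $D^2 v_j\to 0$ to uniform on $B_{1/2}$. Thus the BMO modulus of $D^2 v_j$ on $B_{1/2}$ eventually falls below the threshold $\omega$ of Theorem~\ref{main1}, and a rescaling $B_{1/2}\to B_1$ combined with Theorem~\ref{main1} yields $v_j\in C^\infty(B_{1/4})$ for $j$ large, whence $u\in C^\infty$ in a neighborhood of $x_0$.

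The main obstacle is the smoothness step: upgrading $L^{p_0}$-smallness of $D^2 v_j$ on $B_1$ to BMO-smallness on $B_{1/2}$, which is what Theorem~\ref{main1} consumes. The $L^{p_0}$-convergence combined with the uniform $L^\infty$ bound yields $L^q$-convergence for all finite $q$, but BMO asks for control on every sub-ball; resolving this requires the uniform higher integrability of the first step together with a compactness/Liouville argument for the constant-coefficient linearized Euler--Lagrange equation at $A_\infty$. An equivalent organization runs the blow-up as an excess-decay iteration: establish a one-step geometric decay of $E(x,r):=(\tfrac{1}{|B_r|}\int_{B_r(x)}|D^2u-(D^2u)_{B_r(x)}|^{p_0})^{1/p_0}$ by the same blow-up, iterate to H\"older continuity of $D^2u$ via Campanato, and bootstrap to $C^\infty$ via Schauder estimates for the resulting linear fourth-order equation with H\"older coefficients.
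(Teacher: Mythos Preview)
Your higher-integrability step and the dimension estimate match the paper's proof exactly: the paper invokes Proposition~\ref{p:higher-int-D3-intro} to get $D^3u\in L^{p_0}_{\mathrm{loc}}$, then uses Poincar\'e and \cite{GM}*{Proposition~9.21}. The paper's written proof of Corollary~\ref{c:dim-est} in fact addresses \emph{only} the dimension bound; the assertion $u\in C^\infty(B_1\setminus\Sigma(u))$ is left implicit as a consequence of Theorem~\ref{main1}. You are right to flag this as the real obstacle: $x_0\notin\Sigma(u)$ gives only $L^{p_0}$-smallness of the mean oscillation along \emph{some} sequence of radii, whereas Theorem~\ref{main1} requires a uniform BMO bound over all sub-balls.

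Your first resolution, however, has a gap. From $\|D^2v_j\|_{L^{p_0}(B_1)}\to 0$, $\|D^2v_j\|_{L^\infty}\le M$, and the uniform Caccioppoli/Gehring bounds giving $\|D^3v_j\|_{L^{p_0}(B_{1/4})}\to 0$, one cannot conclude $D^2v_j\to 0$ uniformly on $B_{1/2}$ unless $p_0>n$, which the Gehring exponent does not satisfy in general; the Liouville step is vacuous here (the limit is forced to be affine simply because $D^2v_j\to 0$ in $L^{p_0}$), and without uniform convergence the BMO modulus of $D^2v_j$ on $B_{1/2}$ remains uncontrolled at small scales. Your second organization via excess decay is the correct one: a \emph{normalized} blow-up (dividing also by $E(x_k,r_k)$) produces, after strong $W^{2,p_0}_{\mathrm{loc}}$ compactness coming from the uniform $W^{3,p_0}$ bounds, a nontrivial limit solving the constant-coefficient linearized equation, to which the decay estimates of Theorem~\ref{t:constant_coeff_Lp} apply to yield one-step decay of $E$. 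Iteration plus Campanato then gives $D^2u\in C^{0,\alpha}$ near $x_0$, and \cite{BW1} completes the bootstrap---in fact bypassing Theorem~\ref{main1} entirely.
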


{As a key intermediate step towards proving Theorem \ref{main1}, we establish an improved third order Sobolev regularity estimate, which we believe is of independent interest in itself.

\begin{proposition}\label{p:higher-int-D3-intro}
		Suppose that $u\in W^{2,\infty}(B_{1})$ is a weak solution of  \eqref{main_0} on $B_{1}$ with $D^2u(x)\in U$ for almost-every $x\in B_1$, with $F$ smooth and uniformly convex or concave on $U$. Then there exists $p_0\in (2,\infty)$ such that for any compactly contained open subset $\Omega'$ of $B_1$, $u\in W^{3,p_0}(\Omega')$ and satisfies the estimate
		\begin{equation}
			\left\|u\right\|_{W^{3,p_0}(\Omega')}\leq C\left(\Lambda,\left\|b^{ij,kl}\right\|_{L^\infty(U)}, d(\Omega',\partial B_1)\right) \left\|u\right\|_{W^{2,2}(B_1)},
		\end{equation}
        where $d(\Omega',\partial B_1))$ denotes the distance $\inf_{x\in\Omega'} \mathrm{dist}(x,\partial B_1)$.
		\end{proposition}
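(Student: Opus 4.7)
The plan is to derive a reverse Hölder inequality for $D^3 u$ and then extract higher integrability via Gehring's lemma, proceeding in three steps. Before linearizing anything I first need $u\in W^{3,2}_{loc}(B_1)$, which I establish by a difference quotient argument. For $|h|$ small and $m\in\{1,\ldots,n\}$, set $v_h(x) := h^{-1}(u(x+he_m) - u(x))$. Translating the Euler--Lagrange equation and subtracting gives
\[
\int_{B_{1-|h|}} b_h^{ij,kl}(x)\,(v_h)_{kl}\,\eta_{ij}\, dx = 0, \quad b_h^{ij,kl}(x) := \int_0^1 F^{ij}_{u_{kl}}\bigl((1-t)D^2u(x) + tD^2u(x+he_m)\bigr) dt,
\]
for every $\eta\in C_c^\infty(B_{1-|h|})$. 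By the convexity of $U$ and smoothness of $F$, the coefficients $b_h^{ij,kl}$ are Legendre-elliptic with constant $\Lambda$ and bounded by $\|b^{ij,kl}\|_{L^\infty(U)}$, uniformly in $h$. Testing with $\eta = \zeta^2 v_h$ for a cutoff $\zeta$ between $B_{r/2}$ and $B_r$ and applying Young's inequality to the lower-order terms produces a Caccioppoli-type estimate
\[
\|D^2 v_h\|_{L^2(B_{r/2})} \leq C(r,\Lambda,\|b^{ij,kl}\|_{L^\infty(U)})\, \|v_h\|_{W^{1,2}(B_r)} \leq C(r,\Lambda,\|b^{ij,kl}\|_{L^\infty(U)})\,\|u\|_{W^{2,2}(B_1)},
\]
uniformly in $h$. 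Weak compactness as $h\to 0$ then yields $u\in W^{3,2}_{loc}(B_1)$ with the same quantitative bound.

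With $w := \partial_m u \in W^{2,2}_{loc}(B_1)$ in hand, differentiating \eqref{main_0} in direction $e_m$ gives the linear fourth-order double-divergence equation
\[
\int a^{ij,kl}(x)\, w_{kl}\, \eta_{ij} \, dx = 0 \quad \forall\,\eta\in C_c^\infty(B_1),\qquad a^{ij,kl}(x) := F^{ij}_{u_{kl}}(D^2 u(x)),
\]
whose coefficients are Legendre-elliptic of constant $\Lambda$ and bounded by $\|b^{ij,kl}\|_{L^\infty(U)}$. The crucial observation is that this equation is invariant under $w\mapsto w-\ell$ for any affine $\ell$, since $D^2\ell = 0$. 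For each $B_r\Subset B_1$, I choose the unique affine $\ell_r$ satisfying $(w-\ell_r)_{B_r} = 0$ and $(\nabla(w-\ell_r))_{B_r} = 0$. Testing with $\eta = \zeta^2(w-\ell_r)$ for the same kind of cutoff, applying ellipticity to the leading term and Young's inequality to the rest, will yield the Caccioppoli inequality
\[
\fint_{B_{r/2}}|D^2 w|^2 \leq \frac{C}{r^2}\fint_{B_r}|\nabla(w-\ell_r)|^2 + \frac{C}{r^4}\fint_{B_r}|w-\ell_r|^2.
\]

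The vanishing mean conditions on $\nabla(w-\ell_r)$ and on $w-\ell_r$ permit the Poincar\'e-Sobolev inequality at the Sobolev conjugate exponent $q := \tfrac{2n}{n+2} < 2$ to give
\[
\fint_{B_r}|\nabla(w-\ell_r)|^2 + \frac{1}{r^2}\fint_{B_r}|w-\ell_r|^2 \leq C r^2 \left(\fint_{B_r}|D^2 w|^q\right)^{2/q}.
\]
Substituting into the previous Caccioppoli produces the reverse Hölder inequality
\[
\left(\fint_{B_{r/2}}|D^2 w|^2\right)^{1/2} \leq C\bigl(n,\Lambda,\|b^{ij,kl}\|_{L^\infty(U)}\bigr)\left(\fint_{B_r}|D^2 w|^q\right)^{1/q}
\]
for all $B_r\Subset B_1$. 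Gehring's lemma in the Giaquinta--Modica form then yields some $p_0 > 2$ depending only on $n,\Lambda,\|b^{ij,kl}\|_{L^\infty(U)}$, together with a self-improving $L^{p_0}$--$L^2$ estimate for $D^2 w$. Summing over $m=1,\ldots, n$, combining with the Step~1 bound, controlling the lower-order terms in the $W^{3,p_0}$ norm by Poincar\'e/Sobolev embedding, and covering $\Omega'$ by finitely many balls of radius comparable to $d(\Omega',\partial B_1)$ completes the argument.

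I expect the most delicate step to be the first: the fourth-order nonlinear structure requires careful commutation of finite differences with the mean-value integral defining $b_h$, and the uniform-in-$h$ bound must be controlled purely by $\|u\|_{W^{2,2}(B_1)}$ rather than by higher-order quantities. Once $u\in W^{3,2}_{loc}$ is secured, Steps~2 and 3 amount to a relatively standard Caccioppoli-plus-Gehring argument for divergence-form equations, adapted one derivative higher to accommodate the fourth-order double-divergence structure.
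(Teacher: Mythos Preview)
Your strategy is essentially the paper's: pass to difference quotients to obtain a linear double-divergence equation with Legendre-elliptic coefficients, prove a Caccioppoli inequality by testing with a cutoff times the solution, upgrade to a reverse H\"older inequality, and invoke the Giaquinta--Modica form of Gehring's lemma. The paper carries this out via its Proposition~\ref{Gehring_1} (and Remark~\ref{rem}) together with Proposition~\ref{p:higher-int-D3}, and then observes in Section~\ref{sec:main} that the same argument applies to \eqref{main_0} because the difference quotient $u^{h_m}$ satisfies \eqref{main3}. Your explicit subtraction of an affine function $\ell_r$ before applying Poincar\'e--Sobolev is a clean replacement for the paper's appeal to Gagliardo--Nirenberg; the paper gets away without it in Proposition~\ref{Gehring_1} because there $v=f-w$ has vanishing Cauchy data on $\partial B_r$, whereas your formulation makes the argument work directly for $w=\partial_m u$.

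One technical point to fix: testing with $\eta=\zeta^2 v_h$ (and likewise $\zeta^2(w-\ell_r)$) does not quite close in the fourth-order setting. Expanding $(\zeta^2 v)_{kl}$ produces the term $2\zeta_k\zeta_l\, v$, and when paired against $b^{ij,kl}v_{ij}$ this yields $\int |D^2v|\,|D\zeta|^2\,|v|$ with \emph{no} surviving power of $\zeta$; Young's inequality then gives $\epsilon\int_{\mathrm{supp}\,D\zeta}|D^2v|^2$, which cannot be absorbed into $\Lambda\int\zeta^2|D^2v|^2$. The paper avoids this by using $\tau^4$ (so that every cross term retains at least a factor $\tau^2$); you should either do the same or run a standard hole-filling/iteration between radii $\rho$ and $r$. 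With that adjustment your argument goes through.
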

}

The gradient of any critical point of \eqref{Ffunc} can be seen to solve a second order system, with the additional constraint that one seeks only solutions among conservative vector fields for such a system. {More precisely, if we let $v=Du$ in \eqref{Ffunc} and search for critical points of the gradient-dependent functional 
\begin{equation}\label{e:gradient-energy}
    \int_\Omega F(Dv)\, dx,
\end{equation}
among the class of gradient vector fields $w=(\partial_1 \eta, \dots,\partial_n \eta)$, the weak fourth order equation \eqref{main_0} becomes a second order system of the form
\begin{equation}\label{e:2nd-order-sys}
    \int_\Omega F^{ij}(Dv) \partial_j w_i \, dx,
\end{equation}
where now we are writing $F^{ij} = \frac{\partial F}{\partial (\partial_j v_i)}(Dv)$. Note that here $v_i$ and $w_i$ denote the components of the vector fields $v$ and $w$ respectively, and we are instead using the classical notation $\partial_j$ for derivatives. However, since we are only allowing to test against gradient vector fields in \eqref{e:2nd-order-sys}, we no do not necessarily know that we have a critical point for this second order system with an arbitrary test vector field $w$.} One may nevertheless use this observation to compare the conclusion of Corollary \ref{c:dim-est} to \cite{KristensenMingione}*{Theorem 1.1, Theorem 8.1}, where higher integrability is also exploited to get an improved dimension estimate on the singular set of local minimizers {of gradient-dependent energies such as \eqref{e:gradient-energy} (and generalizations that include additional dependencies)}. Note that the dimension bound in \cite{KristensenMingione}*{Theorem 1.1} is weaker than that demonstrated here, since therein, one is not able to establish integrability for a full additional derivative of the minimizer $u$, but rather only a fractional derivative (cf. Theorem 4.2 therein). On the other hand, in \cite{KristensenMingione}*{Theorem 8.1}, an analogous dimension estimate to \eqref{e:dim} is established for the complement of the set where a local minimizer $u$ {(with a priori $W^{1,p}$-regularity)} is merely $C^{0,\alpha}$.  

Notice that in \cite{KristensenMingione}, the authors focus on local minimizers. In fact, due to the uniform convexity assumption of our energy functional, any critical point of \eqref{Ffunc} is a minimizer relative to $W^{2,2}$ competitors with the same boundary data (cf. Remark \ref{r:bvp}).
%We investigate the impact of the uniform convexity assumption on improved regularity of critical points in forthcoming work.
Let us point out some additional important observations regarding our main results.
 
\begin{comment}
 \textcolor{blue}{In general, I don't think there is a reason to expect the results that hold good for second order systems to be exactly true for a fourth order scalar equation.}
\note{I agree with this, what I meant was more that the improved integrability holds for second order systems also, which should in turn give them this slightly sharper dimension estimate on the singular set there also (if the argument we have here is ok), despite the fact that it isn't written in Giaquinta-Martinazzi}

\end{comment}

\begin{remark}
	It might be possible to drop the a priori regularity assumption on the critical point $u$ of \eqref{Ffunc} below $W^{2,\infty}(B_1)$ (e.g. to $W^{2,2}$), but in general, such a relaxation seems unlikely without additional assumptions on $u$. See Section \ref{ss:W2infty} for a more detailed discussion.
\end{remark}

\begin{remark}
    The exponent $p_0$ in Corollary \ref{c:dim-est} {and Proposition \ref{p:higher-int-D3-intro}} comes from the application of Proposition \ref{Gehring_1}; it is the fourth order analog of the higher integrability exponent for solutions of second order divergence form elliptic PDEs.
\end{remark}

\begin{remark}\label{r:no-sing}
    We do not expect the dimension estimate of Corollary \ref{c:dim-est} to be sharp. Indeed, critical points of \eqref{Ffunc} can be seen to solve the second order system {\eqref{e:2nd-order-sys} in their gradient subject to the constraint that the test vector field $w$ is a gradient}, and solutions of second order elliptic systems may have singularities (cf. \cite{GM}*{Section 9.1}). {However, none of the existing examples therein can be realized as solutions of a fourth order equation. One can therefore ask whether the additional structure coming from the fourth order equation might give rise to improved regularity.} We investigate this problem in the forthcoming work \cite{BS4th}.
\end{remark}

In \cite{BW1}, the first author together with Warren showed that if $F$ in \eqref{Ffunc} is a smooth convex function of the Hessian and can be expressed as a function of the square of the Hessian, then a $C^{2,\alpha}$ critical point (under compactly supported variations) of \eqref{Ffunc} will be smooth. This was achieved by establishing regularity for a class of fourth order equations in the double divergence form, given by \eqref{eq1}.
In \cite{BCW}, Bhattacharya-Chen-Warren studied regularity for a certain class of fourth order equations in double divergence form that in turn led to proving smoothness for any $C^{1}$-regular Hamiltonian stationary
Lagrangian submanifold in a symplectic manifold. More recently, in \cite{ABfourth}, the first author considered variational integrals of the form \eqref{Ffunc} where $F$ is convex and smooth on the Hessian space and showed that a critical point $u\in W^{2,\infty}$ of such a functional under compactly supported variations is smooth if the Hessian of $u$ has a small $L^\infty$-oscillation. Theorem \ref{main1} relaxes the a priori assumption of uniform smallness of oscillation of the Hessian to smallness in an $L^{p_0}$-mean sense, which in turn allows one to deduce the dimension estimate of Corollary \ref{c:dim-est}. The BMO-smallness assumption on the Hessian can be compared with the smallness of the tilt-excess in Allard's regularity theorem \cite{Allard}, which is a sufficient (and generally necessary) condition to establish local regularity of minimal surfaces. \\

In this paper, the proof of the main result, Theorem \ref{main1}, follows a similar strategy as the proof of the Evans-Krylov  regularity estimate \cite{Evans82,Krylov,Krylov2}, where the proof involves first establishing $C^{1,1}$ estimates, followed by
$C^{2,\alpha}$ regularity:
$W^{2,\infty}$ leads to $C^{2,\alpha}$ estimates, from which smoothness follows from \cite{BW1}. %\note{Anna: silly question: once we have $C^{2,\alpha}$, does smoothness follow easily because of Schauder estimates? Or is there something more involved happening here?}\textcolor{blue}{- This needs a reference for sure. I have added it now.}
In \cite{ABfourth}, the assumption on the smallness of the oscillation of the Hessian was key in proving smoothness, since it lead to controlling the oscillation of the leading coefficients of the fourth order equation by the small oscillation modulus of the Hessian of $u$. This small modulus was used to bound the $L^2$ norm of the Hessian of the difference quotient of $u$ by a factor of $r^{n-2+2\alpha}$ on a ball of radius $r$. This in turn led to a uniform $C^{1,\alpha}$ bound on the difference quotient of u, thereby proving $u\in C^{2,\alpha}$ which is sufficient to achieve smoothness (see \cite{BW1}). Indeed, H\"older continuity of the Hessian leads to H\"older continuous coefficients, which leads to a self-improving solution. Then, using the convexity property of $F$, the regularity theory developed in \cite{BW1} %\note{Anna: sorry, again concerning smoothness, maybe it's a good idea to give at least one reference to the regularity theory used to get this, here?} 
is applied to the critical point $u$ to achieve its smoothness.\\

Here, however, since our a priori assumption on the smallness of the oscillation of the Hessian of $u$ is merely in a BMO sense, we are no longer able to control the $L^\infty$-oscillation of the leading coefficients of our fourth order equation by the small oscillation modulus of the Hessian of $u$. As a consequence, after exploiting an integrability improvement (see Proposition \ref{Gehring_1}), one of the main technical challenges that we face lies in controlling the ratio of the $L^p$ norms of the Hessian on balls of varying sizes by the ratio of the radii of those balls. We get around this with a key technical tool in Lemma \ref{HanLin}, which is an adaptation of \cite{HanLin}*{Lemma 3.4} and allows one to suitably absorb error terms when getting a scaled estimate for the Hessian. With the help of this, we indeed achieve a delicate estimate for our ratio bound, which eventually leads to controlling the behavior of the leading coefficients. 

The dimension estimate of Corollary \ref{c:dim-est} on the interior singular set follows as a simple consequence of Theorem \ref{main1}, a standard covering procedure, and {the higher Sobolev regularity estimate of Proposition \ref{p:higher-int-D3-intro} for $u$}, since $\Sigma(u)$ is defined precisely in terms of the failure of the smallness of the BMO-modulus for the Hessian of $u$.\\

\begin{remark}\label{r:bvp}
    Using standard techniques from the calculus of variations (namely, the Direct Method), one can establish the existence of a critical point of \eqref{Ffunc} in the space
    \[
    \mathcal{A}[g]=\{ u\in W^{2,2}(B_1) : u=g \text{ on } \partial B_1 \text{ and } Du=Dg \text{ on } \partial B_1 \} 
    \] for a given $g \in W^{3/2,2}(\partial B_1)$. However, seeking critical points $u$ of \eqref{Ffunc} with general boundary values
    \[
        \begin{cases}
            u=g & \text{on $\partial B_1$,} \\
            Du=f & \text{on $\partial B_1$},
        \end{cases}
    \]
    for any given $g\in W^{3/2,2}(\partial B_1)$, $f\in W^{1/2,2}(\partial B_1)$, is a significantly more difficult problem, and in general remains open.
\end{remark}

\begin{remark}
The second order analog of \eqref{main_0} arises by considering the Euler Lagrange formulation of variational integrals
\begin{equation*}
\int_{\Omega}F(Du)\,dx,
\end{equation*}
defined on gradient spaces. It is worth noting that regularity for critical points of the above functional does not require any restrictions on the gradient due to the well-known De Giorgi-Nash-Moser theory. \\
\end{remark} 

\subsection{Applications.}
{Consider the Hamiltonian stationary equation, which is given by 

\begin{align}
	\int_{\Omega}\sqrt{\det g}g^{ij}\delta^{kl}u_{ik}\eta_{jl}dx=0
	\text{     }\forall\eta   \in C_{0}^{\infty}(\Omega) \label{hstat}
	\end{align}
	where $g=I_n+\left(  D^{2}u\right)  ^{2}$ is the induced metric from the Euclidean metric in $\mathbb C^{n}$. This fourth order equation is of the form \eqref{main_0} and arises naturally in geometry, coming from the Euler-Lagrange equation satisfied by critical points of the volume functional on the Lagrangian submanifold $\{(x, Du(x)) \, : \, x\in \Omega\}$ with respect to compactly supported variations.

As a consequence of the main result in Theorem \ref{main1}, we have the following full regularity result for solutions of the Hamiltonian stationary equation provided that the Hessian is uniformly small enough.

\begin{corollary}\label{c:HS-reg}
    Let $\eta \in (0,1)$. Suppose that $u\in W^{2,\infty}(B_1)$ is a solution of \eqref{hstat} in $B_1$ and $\|D^2u\|_{{L^\infty}(B_1)}\leq 1-\eta$. There exists $\omega(n,\eta)>0$ such that if $D^2u\in\BMO(B_1)$ with modulus $\omega$, then $u$ is smooth in $B_{1/2}$ with interior H\"{o}lder estimates of all orders.
\end{corollary}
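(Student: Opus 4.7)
The plan is to deduce Corollary \ref{c:HS-reg} directly from Theorem \ref{main1} by verifying that the Hamiltonian stationary functional fits the convexity framework of \eqref{Ffunc} on an appropriate convex neighborhood $U \subset S^{n\times n}$ that contains the range of $D^2u$. Observe that \eqref{hstat} is the Euler-Lagrange equation of \eqref{Ffunc} with the choice
\[
    F(M) = \sqrt{\det(I_n + M^2)}, \qquad M \in S^{n\times n},
\]
since $\sqrt{\det g}\,g^{ij}\delta^{kl} = F^{ij,kl}(D^2u)$ is (up to symmetrization in the Hessian indices) the partial derivative $\frac{\partial F}{\partial u_{ij}}(D^2u)$ evaluated on second derivatives. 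Thus any $W^{2,\infty}$ solution of \eqref{hstat} is a critical point of \eqref{Ffunc} in the sense of \eqref{main_0}.

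Next, set
\[
    U \coloneqq \{M \in S^{n\times n} : |M| < 1 - \eta/2\},
\]
which is open, convex, and contains the range of $D^2u$ almost-everywhere by the a priori hypothesis $\|D^2u\|_{L^\infty(B_1)} \leq 1-\eta$. On $U$, every eigenvalue $\lambda_i(M)$ satisfies $|\lambda_i(M)| \leq |M| < 1 - \eta/2$, so $I_n + M^2$ has smallest eigenvalue bounded below by $1$ and $\det(I_n + M^2)$ is bounded away from $0$; in particular $F$ is smooth on $U$. The core technical step is to verify the Legendre ellipticity condition with a constant $\Lambda = \Lambda(n,\eta) > 0$, i.e.
\[
    \frac{\partial^2 F}{\partial M_{ij} \partial M_{kl}}(M)\,\sigma_{ij}\sigma_{kl} \geq \Lambda\,|\sigma|^2 \qquad \forall\,\sigma \in S^{n\times n},\ \forall\,M \in U.
\]
A direct expansion of $\log\det(I_n + M^2)$ via the Taylor series of $\log(1+x)$, combined with the fact that $\tfrac{d^2}{d\lambda^2}\log(1+\lambda^2) = \tfrac{2(1-\lambda^2)}{(1+\lambda^2)^2}$ is strictly positive and bounded below by a constant $c(\eta) > 0$ on $[-(1-\eta/2),1-\eta/2]$, shows that $\log F$ (and hence $F$, since $F \geq 1$) is uniformly convex as a symmetric function of the eigenvalues on the corresponding eigenvalue region. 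Davis' theorem for unitarily invariant functions of symmetric matrices then transfers this to uniform convexity of $M \mapsto F(M)$ on $U$, with a quantitative Legendre constant $\Lambda = \Lambda(n,\eta) > 0$.

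With these ingredients in place, Theorem \ref{main1} applies with $\omega = \omega(\Lambda(n,\eta), n, 1-\eta) = \omega(n, \eta) > 0$, giving $u \in C^\infty(B_{1/2})$. The interior H\"older estimates of all orders follow from standard interpolation between the smoothness on $B_{1/2}$ and the quantitative bounds obtained en route from Proposition \ref{p:higher-int-D3-intro} and the $\varepsilon$-regularity iteration leading to Theorem \ref{main1}. The main obstacle lies in establishing the quantitative uniform convexity of $F$ throughout $U$, rather than only in a (possibly small) neighborhood of $M=0$; the eigenvalue reformulation together with Davis' theorem circumvents this by reducing the matter to uniform convexity of a single-variable function on an interval separated from $\pm 1$.
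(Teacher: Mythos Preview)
Your proposal is correct and follows essentially the same route as the paper: verify that the area integrand $F(M)=\sqrt{\det(I_n+M^2)}$ is uniformly convex on a neighborhood where all eigenvalues are bounded away from $\pm 1$, then apply Theorem~\ref{main1}. The paper computes the eigenvalue Hessian of $V$ directly rather than passing through $\log F$; one small point to tighten is that Davis' theorem transfers convexity but not a quantitative Legendre constant, so you should either invoke the full spectral-Hessian formula (including the off-diagonal divided-difference terms, which here equal $V\cdot\tfrac{1-\lambda_i\lambda_j}{(1+\lambda_i^2)(1+\lambda_j^2)}>0$ on $U$) or use compactness on $\overline{U}$ to extract $\Lambda(n,\eta)$.
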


We discuss this application to the Hamiltonian stationary equation, together with the presence of the a priori $W^{2,\infty}$ bound, in more detail in Section \ref{h_stat}.}

While we focus on our application to the fourth order Hamiltonian stationary equation, let us mention a small subset of other interesting geometric equations to which our results may be applied. Interacting
agent problems can be described in an optimal transport framework (cf. \cite[Chapter 7]{MR3409718}) via
\begin{align*}
F({x,D^{2}u})  & =\int_\Omega \tilde{g}\left(  \det(u_{ij})\right)\,dx+\int_\Omega \Tilde{G}(x)\det
(u_{ij})\,dx\\
& +\int_\Omega\underbrace{\left[\int_\Omega W(x,y)\det(u_{ij}(y))\det(u_{ij}(x))\,dy\right]}_{{=H(D^2 u)(x)}}\,dx,
\end{align*}
where $W$ is, for example, the integrand associated with the Wasserstein distance. Here, appropriate constraints are imposed on the functions $G$, $g$, $\tilde g$, $\tilde G$, and $W$, as well as on the domain $\Omega$; we refer the reader to the references above for details. {Note that the explicit $x$-dependency in the energy functional here does not affect the Euler-Lagrange equation, so provided that this choice of $F$ is uniformly convex and has enough regularity in $x$, our results may be applied.}

In addition, Abreu's
equation \cite{Abreu, MR2154300}, {Ma-Trudinger-Wang equations} \cite{Ma-Trudinger-Wang}, and prescribed affine curvature equations \cite{MR2137978}
are derived from functionals of the form
\begin{align*}
F({x, u, D^{2}u})=\int_{\Omega}\left(  G\left(  {u_{ij}},\det(u_{ij})\right)  -{g(x)}u\right)\,dx
\end{align*}
(cf. \cite{MR3437587} for progress on these equations). {In this case, the additional dependence on $u$ of the energy functional amends the Euler-Lagrange equation: it adds in a lower-order term involving the second and third derivatives of 
$u$. Nevertheless, this can be treated in a similar manner and is, in fact, simpler to address compared to the fourth-order derivative terms. }

\subsection{Structure of paper} The organization of the paper is as follows: In Section 2, we state and prove preliminary results. In Section 3, we develop regularity theory for weak solutions of (\ref{eq1}) with small BMO-Hessian by first establishing a uniform $C^{2,\alpha}$ estimate for the solution. In Section 4, we prove our main result and its corollary. Finally, in Section 5, we state an application of our main result to the Hamiltonian stationary equation and use it to show the importance of the a priori $W^{2,\infty}$ assumption on $u$ in our main results. \\

\subsection*{Acknowledgments} The authors thank Camillo De Lellis for helpful comments. AB acknowledges
the support of NSF Grant DMS-2350290, the Simons Foundation grant MPS-TSM-00002933, and the AMS-Simons Travel Grant. AS acknowledges the support of the National Science Foundation through the grant FRG-1854147.

 \section{Preliminaries}

We begin by collecting some important preliminary results for solutions of a constant coefficient double divergence equation. The first is a higher integrability analog of \cite{BW1}*{Theorem 2.1}.

 \begin{theorem}\label{t:constant_coeff_Lp}
		Let $p_0>2$ and $r> 0$. Suppose that $w\in W^{2,p_0}(B_{r})$ satisfies the uniformly elliptic constant coefficient equation \begin{equation}
			\int_{B_r} c_{0}^{ij,kl}w_{ij}\eta_{kl}\,dx  =0 \qquad \forall\eta  \in C_{0}^{\infty}(B_{r}(0)),\label{ccoef}
		\end{equation}
		where the coefficient tensor $c_0$ satisfies the Legendre ellipticity condition
  \[
    c_0^{ij,kl}\sigma_{ij}\sigma_{kl}\geq \lambda |\sigma|^2 \quad \forall \sigma\in S^{n\times n}.
  \]
  Then there exist $C_1=C_1(n,p_0,\lambda)$, $C_2=C_2(n,p_0,\lambda)$ such that for any $0<\rho\leq r$, we have
		\begin{align*}
			\int_{B_{\rho}}\left|D^{2}w\right|^{p_0}  &  \leq C_{1}\left(\frac{\rho}{r}\right)^{n}\int_{B_{r}}\left|D^{2}w\right|^{p_0}\\
			\int_{B_{\rho}}\left|D^{2}w-(D^{2}w)_{\rho}\right|^{p_0}  &  \leq C_{2}\left(\frac{\rho}{r}\right)^{n+p_0}%
			\int_{B_{r}}\left|D^{2}w-(D^{2}w)_{r}\right|^{p_0}.
		\end{align*}
	\end{theorem}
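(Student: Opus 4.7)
The plan is to follow the standard Campanato-style strategy for constant-coefficient elliptic equations, using the fact that weak solutions of \eqref{ccoef} are smooth and inherit sharp interior $L^\infty$ bounds on their derivatives. First I would observe that by standard interior regularity theory for constant-coefficient linear equations satisfying the Legendre ellipticity condition, $w \in C^\infty(B_r)$, and moreover any derivative $D^\alpha w$ is again a weak solution of the same equation (one simply differentiates the test function). This is the hook that lets $L^\infty$ bounds on $D^2 w$ automatically upgrade to $L^\infty$ bounds on $D^3 w$.

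Next, the core analytic input I would invoke is the interior $L^\infty\!-\!L^{p_0}$ estimate
\[
\|D^2 w\|_{L^\infty(B_{r/2})} \leq \frac{C(n,p_0,\lambda)}{r^{n/p_0}} \|D^2 w\|_{L^{p_0}(B_r)},
\]
which is the $p_0$-analog of the $L^2$ mean value inequality used in \cite{BW1}*{Theorem 2.1}. Applying the same bound to the function $Dw$, which also solves \eqref{ccoef}, and crucially using that subtracting any affine function $\ell$ from $Dw$ (equivalently, subtracting a quadratic polynomial with constant Hessian $c$ from $w$) leaves $D^3 w$ unchanged, one obtains the improved estimate
\[
\|D^3 w\|_{L^\infty(B_{r/2})} \leq \frac{C(n,p_0,\lambda)}{r^{1+n/p_0}} \|D^2 w - c\|_{L^{p_0}(B_r)}
\]
for every symmetric constant matrix $c$, in particular for $c = (D^2 w)_r$.

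With these two pointwise bounds in hand, both inequalities become routine. For $\rho \leq r/2$, the first estimate is obtained by
\[
\int_{B_\rho} |D^2 w|^{p_0} \leq |B_\rho|\, \|D^2 w\|_{L^\infty(B_\rho)}^{p_0} \leq C\left(\frac{\rho}{r}\right)^n \int_{B_r} |D^2 w|^{p_0},
\]
while for $\rho \in (r/2, r]$ the inequality holds trivially by adjusting $C_1$. For $\rho \leq r/4$, the second estimate follows from a pointwise Poincaré-type bound $|D^2 w(x) - (D^2 w)_\rho| \leq C\rho \|D^3 w\|_{L^\infty(B_\rho)}$, raised to the $p_0$-th power, integrated over $B_\rho$, and then combined with the $L^\infty$ bound on $D^3 w$ above with $c = (D^2 w)_r$; the range $\rho \in (r/4, r]$ is again trivial.

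The main obstacle I expect lies in establishing the interior $L^\infty\!-\!L^{p_0}$ estimate at the fourth-order level, since De Giorgi–Nash–Moser theory is classically formulated for scalar second-order equations. I would handle this by first deriving a Caccioppoli inequality from the Legendre ellipticity, namely $\int_{B_{r/2}} |D^2 w|^2 \leq C r^{-4} \int_{B_r}|w - P|^2$ for affine $P$, then iterating with Sobolev embedding to pass from $L^2$ to $L^{p_0}$. Alternatively, given that $w \in W^{2,p_0}$ is assumed a priori, one can bootstrap the $L^2$ statement of \cite{BW1}*{Theorem 2.1} by applying it to difference quotients and using Calder\'on–Zygmund theory for the associated Fourier multiplier operator (well-defined on $L^{p_0}$ for $p_0 \in (1,\infty)$ by the Legendre ellipticity of the symbol $c_0^{ij,kl}\xi_i \xi_j \xi_k \xi_l$).
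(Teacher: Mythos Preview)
Your proposal is correct and follows essentially the same approach as the paper: interior $L^\infty$--$L^{p_0}$ estimates for constant-coefficient solutions (the paper cites Dong--Kim \cite{DongKimARMA} for this), the observation that derivatives again solve \eqref{ccoef}, subtraction of a quadratic polynomial to replace $D^2 w$ by $D^2 w - (D^2 w)_r$, and a pointwise Poincar\'e bound for the second inequality. One small point: your stated ``core analytic input'' has $D^2 w$ on both sides and does not gain a derivative, so ``applying the same bound to $Dw$'' does not by itself yield the $D^3 w$ estimate with the extra $r^{-1}$; the paper avoids this by quoting Dong--Kim's derivative-gaining form $\|Dw\|_{L^\infty(B_{R/4})} \leq C\|w\|_{L^{p_0}(B_R)}$, and you would need either this or one further iteration of interior regularity.
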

	
	\begin{proof}
		%\note{The dilation invariance only really works if we first reduce to showing this for $w$ instead of $D^2 w$ (else the RHS gains an extra $(r/\rho)^{-2p_0}$)- this can be done since for constant coeff equation, if $w$ is a solution then so are all its derivatives, so I have amended the proof accordingly (in Han Lin they do the same). I initially thought that I could just do everything with $r$ and not set it to be 1 but the scaling didn't quite work out}
		
		We prove the desired estimates for $w$ in place of $D^2 w$; the conclusion follows immediately since whenever $w$ solves \eqref{ccoef}, so do all of its higher order derivatives. By dilation, we may consider $r=1$. We restrict our consideration to the range
		$\rho\in(0,1/4]$ noting that the statement is trivial for $\rho\in\lbrack 1/4,1]$.
		%where $a\in (0,1/2)$ is some constant.
		
		First, we note that $w$ is smooth (cf. \cite[Theorem 33.10]{Driver03}). Recall
		\cite[Section 4, Lemma 2, applied to elliptic case]{DongKimARMA}, which tells us that for a uniformly elliptic, {constant coefficient, linear} $4$th order operator $L_{0}$,
		\begin{align*}
			L_{0}w  =0\text{ in }B_{R} \qquad \implies \qquad \left\Vert Dw\right\Vert _{L^{\infty}(B_{R/4})}\leq C_{3}%
			(\lambda,n,p_0,R)\left\Vert w\right\Vert _{L^{p_0}(B_{R})}.
		\end{align*}
		In particular, we have
		\begin{equation}
			\left\Vert Dw\right\Vert _{L^{\infty}(B_{1/4})}^{p_0}\leq C_{4}(\lambda
			,n,p_0)\left\Vert w\right\Vert _{L^{\infty}(B_{1})}^{p_0}. \label{dong}%
		\end{equation}
		Therefore
		\begin{align*}
			\left\Vert w\right\Vert_{L^{p_0}(B_{\rho})}^{p_0}  &  \leq C_{5}(n)\rho
			^{n}\left\Vert w\right\Vert _{L^{\infty}(B_{1/4})}^{p_0}\\
			&  =C_{5}\rho^{n}\inf_{x\in B_{1/4}}\sup_{y\in B_{1/4}}\left\vert w(x)+w(y)-w(x)\right\vert ^{p_0}\\
			&  \leq C_{5}\rho^{n}\inf_{x\in B_{1/4}}\left[  \left\vert w(x)\right\vert+\frac{1}{2}\left\Vert
			Dw\right\Vert _{L^{\infty}(B_{1/4})}\right]  ^{p_0}\\
			&  \leq C_{6}(n,p_0)\rho^{n}\left[  \inf_{x\in B_{1/4}}\left\vert w(x)\right\vert
			^{2}+\left\Vert Dw\right\Vert _{L^{\infty}(B_{1/4})}^{p_0}\right] \\
			&  \leq C_{6}\rho^{n}\left[  \frac{1}{|B_{1/4}|}\left\|w\right\|_{L^{p_0}(B_{1/4})}%
			^{p_0}+C_{4}\left\|w\right\|_{L^{p_0}(B_{1/4})}^{p_0}\right] \\
			&  \leq C_{7}(n,p_0,\lambda)\rho^{n}\left\|w\right\|_{L^{p_0}(B_{1})}^{p_0}.
		\end{align*}
		
		Similarly,%
		\begin{align}
			\int_{B_{\rho}}\left\vert w-(w)_{\rho}\right\vert ^{p_0}  &  \leq
			\int_{B_{\rho}}\left\vert w-w(0)\right\vert ^{p_0}\nonumber\\
			&  \leq\int_{0}^{\rho}\int_{\mathbb{S}^{n-1}}\tau^{p_0}\left\vert Dw\right\vert
			^{p_0}\tau^{n-1}d\tau d\phi\nonumber\\
			&  =C_{8}(n,p_0,\lambda)\rho^{n+p_0}\left\| Dw\right\| _{L^{\infty}(B_{1/4})}^{p_0}.
			\label{fred}%
		\end{align}
		The estimate \eqref{dong} then yields
		\begin{equation*}
			\int_{B_{\rho}}\left\vert w-(w)_{\rho}\right\vert ^{p_0} \leq C_8 C_4\rho^{n+p_0}\int_{B_{1}}\left\vert w\right\vert ^{p_0}.
		\end{equation*}
		Now we may apply the  above to $D^2 w$ in place of $w$, which is possible since all second order derivatives of $w$ also solve \eqref{ccoef}, as previously mentioned, to give
		\begin{equation}\label{e:av}
			\int_{B_{\rho}}\left\vert D^2 w-(D^2 w)_{\rho}\right\vert ^{p_0} \leq C_8 C_4\rho^{n+p_0}\int_{B_{1}}\left\vert D^2 w\right\vert ^{p_0}.
		\end{equation}
		Next, observe that \eqref{ccoef} is purely fourth order, so the equation still
		holds when a second order polynomial is added to the solution. In
		particular, we may choose $\bar{w}(x) \coloneqq w(x) - \langle \left(D^{2}w\right)_{1}, x\otimes x\rangle$, so that
		\[
		D^{2}\bar{w}=D^{2}w-\left(  D^{2}w\right)  _{1}%
		\]
		also satisfies the equation. \ Then
		\[
		D^{3}\bar{w}=D^{3}w
		\]
		and so by the Poincar\'{e} inequality \eqref{dong}, we have
		\begin{equation}\label{ed}
			\int_{B_{1}}\left\vert D^{2}\bar{w}\right\vert ^{p_0}= \int_{B_{1}}\left\vert D^{2}w-\left(  D^{2}w\right)  _{1}\right\vert
			^{p_0}.
		\end{equation}
		We conclude from \eqref{e:av}, \eqref{ed} and \eqref{fred} that
		\[
		\int_{B_{\rho}}\left\vert D^{2}w-(D^{2}w)_{\rho}\right\vert ^{p_0}\leq C_{8} C_4%
		\rho^{n+2}\int_{B_{1}}\left\vert D^{2}w-\left(  D^{2}w\right)
		_{1}\right\vert ^{p_0}.
		\]
		
	\end{proof}
		
	The next result is a simple consequence of Theorem \ref{t:constant_coeff_Lp} (cf. \cite{BW1}*{Corollary 2.2} for the $p=2$ analog).
	
	\begin{corollary}\label{c:higher_Lp}
		\textit{Suppose that }$w$, $p_0$, and $r$ \textit{are as in Theorem \ref{t:constant_coeff_Lp}, and let $C_1$, $C_2$ be the constants therein.
			Then there exists $C_0=C_0(n,p_0)$ such that for any~}$u\in W^{2,p_0}(B_{r}),$ and\textit{ any~} $0<\rho\leq r,$
		there hold the following two inequalities
		\begin{align}
			\int_{B_{\rho}}\left\vert D^{2}u\right\vert ^{p_0} &\leq C_0 C_{1}\left(\frac{\rho}{r}\right)^{n}%
			\int_{B_{r}}\left\vert D^{2}u\right\vert ^{p_0}  \label{twothree} \\
            &\qquad +\left(  C_0 +C_0^2 C_{1}\right)
			\int_{B_{r}}\left\vert D^{2}(u-w)\right\vert ^{p_0} , \nonumber\\
   %\textit{\centering{and}}\nonumber\\
			\int_{B_{\rho}}\left\vert D^{2}u-(D^{2}u)_{\rho}\right\vert ^{p_0} 
			&\leq C_0^2 C_{2}\left(\frac{\rho}{r}\right)^{n+p_0}\int_{B_{r}}\left\vert D^{2}u-(D^{2}u)_{r}\right\vert
			^{p_0} \label{twofive} \\
            &\qquad+\left(  2C_0^2 +2C_0^3 C_{2}\right)  \int_{B_{r}}\left\vert D^{2}(u-w)\right\vert
			^{p_0}. \nonumber
		\end{align}
	\end{corollary}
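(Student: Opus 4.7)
The plan is to prove this by the standard comparison trick: write $u = w + (u-w)$ so that one can appeal directly to Theorem \ref{t:constant_coeff_Lp} on $D^2w$ and lump the remainder terms together on the right-hand side. All the work is in managing the triangle inequality and the constants.

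For the first estimate, I would start from the pointwise inequality $|D^2u|^{p_0}\leq C_0(|D^2w|^{p_0}+|D^2(u-w)|^{p_0})$ with $C_0 = C_0(n,p_0)$, integrate over $B_\rho$, and then apply Theorem \ref{t:constant_coeff_Lp} to bound $\int_{B_\rho}|D^2w|^{p_0} \leq C_1(\rho/r)^n \int_{B_r}|D^2w|^{p_0}$. I would then reverse-triangle to pass back to $u$ on $B_r$ via $|D^2 w|^{p_0}\leq C_0 (|D^2 u|^{p_0}+|D^2(u-w)|^{p_0})$. Using $\rho\leq r$ to absorb $\int_{B_\rho}|D^2(u-w)|^{p_0}\leq \int_{B_r}|D^2(u-w)|^{p_0}$ and collecting terms should yield the desired bound \eqref{twothree}.

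For the oscillation estimate \eqref{twofive}, the same decomposition applied to $D^2u-(D^2u)_\rho = (D^2w - (D^2w)_\rho) + (D^2(u-w) - (D^2(u-w))_\rho)$ gives, after raising to the $p_0$-th power,
\[
\int_{B_\rho}|D^2u - (D^2u)_\rho|^{p_0} \leq C_0 \int_{B_\rho}|D^2w - (D^2w)_\rho|^{p_0} + C_0 \int_{B_\rho}|D^2(u-w) - (D^2(u-w))_\rho|^{p_0}.
\]
The first term on the right is handled by the second estimate of Theorem \ref{t:constant_coeff_Lp}, which produces the favorable factor $(\rho/r)^{n+p_0}$, and I would then pass back to $u$ on $B_r$ as before, this time using the oscillation of $D^2u$ and the full $L^{p_0}$ norm of $D^2(u-w)$ (the averaging can be thrown away in the latter via the standard bound $\int|f-(f)_\rho|^{p_0}\leq C_0\int|f|^{p_0}$). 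For the second (remainder) term I would use the same inequality $\int_{B_\rho}|g-(g)_\rho|^{p_0} \leq C_0 \int_{B_\rho}|g|^{p_0}$ with $g = D^2(u-w)$, followed by $\int_{B_\rho}\leq \int_{B_r}$. Collecting all contributions yields \eqref{twofive}.

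There is no real obstacle here; the statement is a direct bookkeeping exercise once Theorem \ref{t:constant_coeff_Lp} is in place. The only mild subtlety is that the constants in front of each piece are obtained by iterating the elementary inequality $|a+b|^{p_0}\leq C_0(|a|^{p_0}+|b|^{p_0})$ twice (once to split $D^2u$ on $B_\rho$, once to recombine $D^2w$ into $D^2u$ on $B_r$), which is what produces the factor $C_0^2C_1$ in the remainder coefficient of \eqref{twothree} and the factor $C_0^3C_2$ in \eqref{twofive}.
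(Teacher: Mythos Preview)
Your proposal is correct and follows essentially the same approach as the paper: set $v=u-w$, split via $|a+b|^{p_0}\leq C_0(|a|^{p_0}+|b|^{p_0})$, apply Theorem~\ref{t:constant_coeff_Lp} to the $w$-part, then split again to return from $D^2w$ to $D^2u$ on $B_r$, absorbing remainder terms using $\rho\leq r$. Your accounting of why $C_0^2C_1$ and $C_0^3C_2$ appear (two applications of the elementary $p_0$-power inequality) matches the paper's computation exactly.
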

	
	\begin{proof}
		%Once again, this follows by the same reasoning as \cite{BW1}*{Corollary 2.2}, but we repeat the proof here. 
  Let $v=u-w.$ Then \eqref{twothree} follows from direct computation and Theorem \ref{t:constant_coeff_Lp}:
		\begin{align*}
			\int_{B\rho}\left|D^{2}u\right|^{p_0}  &  \leq C_0(n,p_0)\left[\int_{B_{\rho}}\left\vert D^{2}w\right\vert^{p_0}+\int_{B_{\rho}%
			}\left|D^{2}v\right|^{p_0}\right] \\
			&  \leq C_0 C_{1}\left(\frac{\rho}{r}\right)^{n}\int_{B_{r}}\left\vert D^{2}w\right\vert^{p_0}+C_0\int_{B_{r}}%
			\left|D^{2}v\right|^{p_0}\\
			&  \leq C_0^2 C_{1}\left(\frac{\rho}{r}\right)^{n}\left[  \left\|D^{2}v\right\|_{L^{p_0}(B_{r})}^{p_0}+\left\|D^{2}%
			u\right\|_{L^{p_0}(B_{r})}^{p_0}\right]  +C_0\int_{B_{r}}\left|D^{2}v\right|^{p_0}\\
			&  =C_0^2 C_{1}\left(\frac{\rho}{r}\right)^{n}\left\Vert D^{2}u\right\Vert _{L^{p_0}(B_{r})}%
			^{p_0}+\left(C_0^2+C_0 C_{1}\left(\frac{\rho}{r}\right)^{n}\right)\left\Vert D^{2}v\right\Vert _{L^{p_0}(B_{r})}^{p_0}.
		\end{align*}
		~
		
		Similarly%
		\begin{align*}
			\int_{B\rho}\left\vert D^{2}u-(D^{2}u)_{\rho}\right\vert ^{p_0}  &  \leq
			C_0\int_{B_{\rho}}\left\vert D^{2}w-(D^{2}w)_{\rho}\right\vert ^{p_0}%
			\\
            &\qquad+C_0\int_{B_{\rho}}\left\vert D^{2}v-(D^{2}v)_{\rho}\right\vert ^{p_0}\\
			&  \leq C_0\int_{B_{\rho}}\left\vert D^{2}w-(D^{2}w)_{\rho}\right\vert ^{p_0} +2C_0^2\int_{B_{\rho}}\left\vert D^{2}v\right\vert ^{p_0}\\
			&  \leq C_0 C_{2}\left(\frac{\rho}{r}\right)^{n+p_0}\int_{B_{r}}\left|D^{2}w-(D^{2}w)_{r}\right|^{p_0} \\
            &\qquad+2C_0^2\int_{B_{\rho}}\left\vert D^{2}v\right\vert ^{p_0}\\
			&  \leq C_0 C_{2}\left(\frac{\rho}{r}\right)^{n+p_0}\left\{
			\begin{array}
				[c]{c}%
				C_0\int_{B_{r}}\left\vert D^{2}u-(D^{2}u)_{r}\right\vert ^{p_0}\\
				+C_0\int_{B_{r}}\left\vert D^{2}v-(D^{2}v)_{r}\right\vert ^{p_0}%
			\end{array}
			\right\}  \\
            &\qquad +2C_0^2\int_{B_{r}}\left\vert D^{2}v\right\vert ^{p_0}\\
			&  \leq C_0^2 C_{2}\left(\frac{\rho}{r}\right)^{n+p_0}\int_{B_{r}}\left\vert D^{2}u-(D^{2}u)_{r}%
			\right\vert ^{p_0}\\
			&  \qquad +\left(  2C_0^2 +2 C_0^3 C_{2}\left(\frac{\rho}{r}\right)^{n+p_0}\right)  \int_{B_{r}}\left\vert
			D^{2}v\right\vert ^{p_0}.
		\end{align*}
		The statement follows, noting that $\rho/r\leq1.$
	\end{proof}

We conclude this section with the following technical lemma, which is a modified version of \cite[Lemma 3.4]{han} and \cite[Lemma 5.13]{GM}. This modified technical tool is crucial for the proof of Theorem \ref{main1}.

 \begin{lemma}
\label{HanLin} \cite[Lemma 3.4]{han}. Let $\phi$ be a nonnegative and nondecreasing function on $[0,R].$ Then there exists $\theta=\theta(A,\gamma,\kappa) \in (0,1)$ such that the following holds. Suppose that
\begin{equation}\label{e:quant-monoton}
\phi(\tau)\leq A\left[  \left(  \frac{\tau}{r}\right)  ^{\kappa}%
+\varepsilon\right]  \phi(r)+Br^{\beta}%
\end{equation}
for any $0<\tau\leq \theta r\leq \theta R$ with $A,B,\beta,\kappa$ nonnegative constants
and $\beta<\kappa.$ \ Then for any $\gamma\in(\beta,\kappa),$ there exists $\varepsilon_{0}=\varepsilon_{0}(A,\kappa,\beta,\gamma)$ such that if
$\varepsilon<\varepsilon_{0}$ we have the following for all $0<\tau\leq \theta r\leq \theta R$%
\[
\phi(\tau)\leq c\left[  \left(  \frac{\tau}{r}\right)  ^{\gamma}%
\phi(r)+B\tau^{\beta}\right],
\]
where $c$ is a positive constant depending on $A,\kappa,\beta,\gamma.$ \ In
particular, for any $0<\tau\leq \theta R$ we have
\[
\phi(\tau)\leq c\left[  \frac{\phi(R)}{R^{\gamma}}\tau^{\gamma}+B\tau^{\beta}\right]
.
\]
Moreover, if $\beta = 0$, one may explicitly choose $\theta = (2A)^{-\frac{2}{\kappa}}$.
\end{lemma}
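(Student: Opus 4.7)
The plan is to carry out the standard dyadic iteration of Han-Lin, with the modifications dictated by the weaker hypothesis (the inequality is only assumed for $\tau \leq \theta r$) and with careful tracking of the constants to yield the explicit choice of $\theta$ in the case $\beta = 0$.

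First, I would fix $\gamma \in (\beta, \kappa)$ and choose $\theta \in (0,1)$ so that $A\theta^{\kappa} \leq \tfrac{1}{2}\theta^{\gamma}$; this is possible as soon as $\theta \leq (2A)^{-1/(\kappa-\gamma)}$. Next, I would set $\varepsilon_0 = \theta^{\gamma}/(2A)$, so that whenever $\varepsilon < \varepsilon_0$, the hypothesis \eqref{e:quant-monoton}, applied at $\tau = \theta r$ (which is permissible since $\theta r \leq \theta r$), yields
\[
\phi(\theta r) \leq A\left[\theta^{\kappa}+\varepsilon\right]\phi(r) + B r^{\beta} \leq \theta^{\gamma}\phi(r) + Br^{\beta}.
\]
This inequality is the engine of the iteration.

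The next step is to iterate this estimate at radii $\theta^{k} r$ for $k = 0, 1, 2, \ldots$, as long as $\theta^{k} r \leq R$. Setting $a_k := \phi(\theta^{k} r)$, one obtains $a_{k+1} \leq \theta^{\gamma} a_k + B\theta^{k\beta} r^{\beta}$, hence by induction
\[
\phi(\theta^{k} r) \leq \theta^{k\gamma}\phi(r) + B r^{\beta}\sum_{j=0}^{k-1} \theta^{(k-1-j)\gamma}\theta^{j\beta}.
\]
Since $\beta < \gamma$, the geometric sum is dominated by a constant multiple of $\theta^{(k-1)\beta}(1-\theta^{\gamma-\beta})^{-1}$, which yields
\[
\phi(\theta^{k} r) \leq \theta^{k\gamma}\phi(r) + c(\gamma,\beta,\theta) \, B (\theta^{k} r)^{\beta}.
\]
For an arbitrary $\tau \in (0,\theta r]$, I would choose the unique integer $k \geq 1$ with $\theta^{k+1} r < \tau \leq \theta^{k} r$ and apply monotonicity of $\phi$: using $\theta^{k\gamma} \leq \theta^{-\gamma}(\tau/r)^{\gamma}$ and $\theta^{k} \leq \tau/(\theta r)$, this produces the desired bound
\[
\phi(\tau) \leq c\left[\left(\frac{\tau}{r}\right)^{\gamma}\phi(r) + B \tau^{\beta}\right].
\]
The ``in particular'' statement then follows by taking $r = R$.

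For the final claim, when $\beta = 0$ the term $Br^{\beta}$ no longer needs to absorb any decay in $r$, so one is free to pick the simplest admissible exponent $\gamma = \kappa/2 \in (0,\kappa)$. With this choice, the balancing condition $A\theta^{\kappa} \leq \tfrac12 \theta^{\gamma}$ becomes $\theta^{\kappa/2} \leq 1/(2A)$, which gives the explicit value $\theta = (2A)^{-2/\kappa}$ stated in the lemma. There is no real obstacle in this argument; the only mild care is in bookkeeping the geometric sum and verifying that the inequality $\theta^{k+1} r \leq \theta(\theta^k r)$ remains within the admissible range $\tau \leq \theta r$ at every step of the iteration, which is automatic from $\theta < 1$.
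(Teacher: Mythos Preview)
Your proof is correct and is precisely the standard Han--Lin iteration that the paper invokes; the paper itself omits the argument and refers to \cite{HanLin,GM}, noting only that the scale satisfying $2A\theta^\kappa=\theta^\gamma$ (i.e.\ $\theta=(2A)^{-1/(\kappa-\gamma)}$) yields the ``Moreover'' clause. Your derivation of the explicit $\theta=(2A)^{-2/\kappa}$ by specializing to $\gamma=\kappa/2$ is exactly consistent with that hint, so there is nothing to add.
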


{The key modification required in this lemma, compared to its variant in \cite{han} and \cite{GM}, is that the estimate \eqref{e:quant-monoton} is only assumed to hold up to the scale $\theta r$. We are indeed only able to assume this weaker initial hypothesis when we apply this lemma in the next section.}

We omit the proof of Lemma \ref{HanLin} here, and simply refer the reader to \cite{HanLin} or \cite{GM}. Notice that the choice of scale $\tau$ such that $2A\tau^\kappa = \tau^\gamma$ in the proof therein guarantees the final conclusion of the lemma.

	\section{Regularity Theory for small BMO-Hessian}
\label{sec:small_bmo}
	In this section, we develop a regularity theory for weak solutions $u$ of the following fourth order equation in the double divergence form:
\begin{equation}
\int_{\Omega}a^{ij,kl}(D^{2}u)u_{ij}\eta_{kl}\,dx=0,\text{ }\forall\eta\in
C_{0}^{\infty}(\Omega), \label{eq1}
\end{equation}
 where $\Omega \subset \R^n$ is an open set. {The regularity theory for weak solutions to the more general uniformly convex (or concave) fourth order equation \eqref{main_0} will follow from the results in this section. We will see this in Section \ref{sec:main}.}
 
 Denoting $h_{m}=he_m$, we start by introducing the following definition.
\begin{definition}[Regular equation]
	We define equation \eqref{eq1} to be regular on $U$ when the 
	following conditions are satisfied on $U$:
	\begin{enumerate}
	    \item[(i)] The coefficients $a^{ij,kl}$ depend smoothly on $D^{2}u$.
	    \item[(ii)] The linearization of \eqref{eq1} is uniformly elliptic, namely, the leading coefficient of the linearized equation given by 
	    \begin{align}
	        b^{ij,kl}&(D^2u(x)) \label{Bdef}\\
            &\quad=\int_{0}^1\frac{\partial}{\partial{u_{ij}}} \bigg[a^{pq,kl}(D^2u(x)+t [D^2u(x+h_m)-D^2u(x)])u_{pq}(x)\bigg]dt \notag
	    \end{align}
	     satisfies the standard Legendre ellipticity condition:
	    \begin{equation}
b^{ij,kl}(\xi)\sigma_{ij}\sigma_{kl}\geq\Lambda\vert \sigma\vert^2 \quad \forall \sigma \in S^{n\times n}, \ \forall \xi\in U. 
\label{Bcondition} 
	\end{equation}

	\end{enumerate}
	\end{definition}
\begin{remark}
Observe that \eqref{eq1} is indeed regular in the sense of \cite[definition 1.1]{BW1} since for a uniformly continuous Hessian, the coefficient given by \eqref{Bdef} takes the form of the $b^{ij,kl}$ coefficient shown in \cite[(1.3)]{BW1} as $h\rightarrow 0$:
\begin{equation*}
b^{ij,kl}(D^2u(x))=a^{ij,kl}(D^2u(x))+\frac{\partial a^{pq,kl}}{\partial
	u_{ij}}(D^2u(x))u_{pq}(x) .
\end{equation*}

\end{remark}

\begin{remark}\label{r:apriori-reg}
Note that for any $\Omega' \subset \Omega$, we have
\[\left\|b^{ij,kl}\right\|_{L^\infty(\Omega')} \leq C\left(\left\|D^2 u\right\|_{L^\infty(\Omega')}\right).
\]
This is the only obstruction to removing dependency on $\left\|D^2 u\right\|_{L^\infty}$ in the constants in our techniques. Indeed, if it were possible to successfully remove such a dependency, we could relax our a priori regularity assumption on $u$ to be merely $W^{2,2}$. However, the example of the Hamiltonian stationary equation and the uniform convexity hypothesis together suggest that such a relaxation would be difficult, if at all possible; see Section \ref{ss:W2infty}.
\end{remark}

Before we proceed, let us make the following important observation, which will be crucial in the arguments that follow. For $m=1,\dots,n$, $h>0$ and a given function $g$, let $g^{h_m}$ denote the difference quotient $\frac{g(x+h_m) - g(x)}{h}$ in the direction $e_m$, recalling our previously introduced notation $h_m=h e_m$. Let $\Omega' \Subset \Omega$. For $h\leq d(\Omega',\partial\Omega)$ we may take a single difference quotient in \eqref{eq1}, which gives
\[
\int_{\Omega'}[a^{ij,kl}(D^{2}u)u_{ij}]^{h_{m}}\eta_{kl}\,dx=0.
\]
We write the first difference quotient 
as
\begin{align}
&\lbrack a^{ij,kl}(D^{2}u)u_{ij}]^{h_{m}}(x)  =a^{ij,kl}(D^{2}
u(x+h_m))\frac{u_{ij}(x+h_m)-u_{ij}(x)}{h}\nonumber\\
& \qquad\qquad +\frac{1}{h}\left[  a^{ij,kl}(D^{2}u(x+h_m))-a^{ij,kl}(D^{2}%
u(x))\right]  u_{ij}(x)\nonumber\\
& \qquad =a^{ij,kl}(D^{2}u(x+h_m))u^{h_m}_{ij}(x)\nonumber\\
& \qquad\qquad +\left[  \int_{0}^{1}\frac{\partial a^{ij,kl}}{\partial u_{pq}}%
(tD^{2}u(x+h_m)+(1-t)D^{2}u(x))dt\right]u_{ij}(x)  u^{h_m}_{pq}(x).\nonumber
\end{align}
Then, letting $f=u^{h_m}$ for $h$ sufficiently small, equation \eqref{eq1} satisfied by $u$, and definition \eqref{Bdef} of the linearized coefficient $b^{ij,kl}$ yields
\begin{equation}\label{e:linearized}
    \int_{\Omega'} b^{ij,kl} f_{ij} \eta_{kl}\,dx = 0 \qquad \forall \eta\in C_0^\infty(\Omega'),
\end{equation}
where we suppress the dependency of $b^{ij,kl}$ on $D^2 u$ in order to simplify notation.

In the remainder of this section, we assume that $B_1\subset \Omega$, which we may do without loss of generality, by rescaling and translation. Let us first present the following proposition, which will prove to be a key tool throughout. It is a fourth order analog of a standard application of the Giaquinta-Modica variant of Gehring's Lemma \cite[Proposition 5.1]{GiaMod}. 
% \note{Anna: just as a sanity check, for this, it doesn't matter what the constant coeff PDE that $w$ solves is, right? As in, we don't need to choose $c_0$ to be anything specific?}
% \textcolor{blue}{From what I understand, $c_0$ plays a role in the uniform ellipticity constant. That should be no problem at all for us.}

% \note{I guess here I was just wondering if we needed to be choosing a specific constant coeff PDE for $w$ to solve, like with the coeff being the average of $b^{ij,kl}$ or something... but I think at this point we don't need to, it comes in later with the oscillation}
% \textcolor{blue}{Right. Later we would need $c_0=(b^{ij,kl})_r$}

%\textcolor{blue}{Does it make sense to write $w$ as a solution of the constant coefficient equation $(b^{ij,kl})_r$ instead of $c_0$? That would make the constant look cleaner. But again the proposition is true for $w$ satisfying any uniformly elliptic constant coefficient equation.}
%\note{This is what I was wondering before with my comment: because it is true regardless of the constant coeff equation that $w$ solves, but somehow this result is only meaningful when we end up making a choice of $c_0$ that corresponds to freezing the coeff $b^{ij,kl}$... but maybe it is still worthwhile keeping it as it is, just in case it is useful in more generality, I'm not sure}
 \begin{proposition}\label{Gehring_1}
There exists $\bar{p}\in [1,2)$ such that the following holds. Let $w$ be as in Theorem \ref{t:constant_coeff_Lp}, let $u\in W^{2,\infty}(B_{1})$ be a weak solution of the regular equation \eqref{eq1} on $B_{1}$ with $D^2u(x)\in U$ for almost-every $x\in B_1$,
and let $v=f-w$ for $f=u^{h_m}$ as described above. Then $D^2v\in L^{p_0}_{loc}(B_1)$ and there exists $C=C(n,|c_0|,\|b^{ij,kl}\|_{L^\infty(U)})$  such that for each $x_0\in B_1$ and each $0< s < \frac{1}{2}\mathrm{dist}(x_0,\partial B_1)$, we have
\begin{equation}
    \bigg(\frac{1}{|B_{{s}}|}\int_{B_{s}(x_0)}\left|D^2v\right|^{2}\,dx\bigg)^{\frac{1}{2}}\leq C\bigg(\frac{1}{|B_{2s}|}\int_{B_{2s}(x_0)}\left|D^2v\right|^{\bar{p}}\,dx\bigg)^{\frac{1}{\bar{p}}}.\label{e:Gehring-doubling}
\end{equation}
In particular, there exists $p_0\in (2,\infty)$ such that for any $r\leq 1$ and $\rho \in (0, r)$, there exists $\bar{C}=\bar{C}(n,\Lambda, p_0, |c_0|,\|b^{ij,kl}\|_{L^\infty(U)}, r-\rho)$ such that
\begin{equation}
    \bigg(\frac{1}{|B_{{\rho}}|}\int_{B_{\rho}}\left|D^2v\right|^{p_0}\,dx\bigg)^{\frac{1}{p_0}}\leq \bar{C}\bigg(\frac{1}{|B_{r}|}\int_{B_{r}}\left|D^2v\right|^2\,dx\bigg)^{\frac{1}{2}}. \label{Gehring_2}
\end{equation}

\end{proposition}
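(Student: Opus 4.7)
The plan is to establish the reverse Hölder doubling inequality \eqref{e:Gehring-doubling} for $D^2 v$ via a Caccioppoli plus Sobolev-Poincaré argument, and then upgrade to the higher integrability \eqref{Gehring_2} by invoking the Giaquinta-Modica variant of Gehring's lemma. The first step is to derive the PDE satisfied by $v = f - w$: subtracting \eqref{ccoef} from \eqref{e:linearized} gives
\[
\int b^{ij,kl}(D^2 u)\, v_{ij}\, \eta_{kl}\,dx = \int \left(c_0^{ij,kl} - b^{ij,kl}(D^2 u)\right) w_{ij}\, \eta_{kl}\,dx
\]
for every $\eta \in C_c^\infty(B_1)$.

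For the Caccioppoli step, I would fix a ball $B_{2s}(x_0) \Subset B_1$, take a standard cutoff $\phi \in C_c^\infty(B_{2s}(x_0))$ with $\phi \equiv 1$ on $B_s(x_0)$ and $|D^k \phi| \leq C s^{-k}$ for $k = 0,1,2$, and an affine polynomial $\ell$ to be chosen. Testing the PDE for $v$ with $\eta = \phi^4 (v - \ell)$ (noting $\ell_{ij} = 0$), the Legendre ellipticity \eqref{Bcondition} of $b$ produces $\Lambda \int \phi^4 |D^2 v|^2$ from the principal term, while the cross-terms coming from the product-rule expansion of $(\phi^4(v-\ell))_{kl}$ and the forcing on the right-hand side are controlled via Cauchy-Schwarz with $\varepsilon$-absorption. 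This yields a Caccioppoli-type estimate
\[
\int_{B_s(x_0)} |D^2 v|^2 \leq \frac{C}{s^4}\int_{B_{2s}(x_0)} |v - \ell|^2 + \frac{C}{s^2}\int_{B_{2s}(x_0)} |Dv - D\ell|^2 + \text{(controlled forcing from $D^2 w$)}.
\]

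For the Sobolev-Poincaré step, I would choose $\ell(x) = (v)_{B_{2s}(x_0)} + (Dv)_{B_{2s}(x_0)} \cdot (x - x_0)$; two successive applications of Sobolev-Poincaré yield
\[
\left(\frac{1}{|B_{2s}|}\int_{B_{2s}(x_0)} |v - \ell|^2\right)^{1/2} \leq C s^2 \left(\frac{1}{|B_{2s}|}\int_{B_{2s}(x_0)} |D^2 v|^{\bar p}\right)^{1/\bar p}
\]
for any $\bar p \in [\max\{1, 2n/(n+4)\}, 2)$, together with the analogous estimate for $|Dv - D\ell|^2 / s^2$. Combining with the Caccioppoli estimate produces \eqref{e:Gehring-doubling}. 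The higher integrability \eqref{Gehring_2} then follows immediately from the Giaquinta-Modica variant of Gehring's lemma applied on nested balls $B_\rho \subset B_r$; the dependence of $\bar C$ on $r - \rho$ arises from the standard covering argument used in that lemma.

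The main obstacle will be the clean handling of the forcing term $(c_0^{ij,kl} - b^{ij,kl}(D^2 u))\,w_{ij}$ in the equation for $v$, since the right-hand side of \eqref{e:Gehring-doubling} must involve only $D^2 v$ (no forcing). One route is to first carry out the Caccioppoli plus Sobolev-Poincaré argument directly for $f$ (which satisfies \eqref{e:linearized} with zero right-hand side), obtaining the reverse Hölder and hence $D^2 f \in L^{p_0}_{loc}$ by Gehring, and then deduce the corresponding bound for $D^2 v = D^2 f - D^2 w$ by invoking the higher integrability of $D^2 w$ from Theorem \ref{t:constant_coeff_Lp}. Alternatively, one chooses $c_0$ to be the average of $b^{ij,kl}(D^2 u)$ over $B_{2s}(x_0)$, so that the forcing is expressed in terms of oscillations of the coefficient tensor and can be absorbed via Cauchy-Schwarz and the $L^\infty$ bound on $\|b^{ij,kl}\|_{L^\infty(U)}$.
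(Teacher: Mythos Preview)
Your overall strategy---Caccioppoli inequality, Sobolev--Poincar\'e to obtain a reverse H\"older estimate, then the Giaquinta--Modica form of Gehring's lemma---matches the paper's. The route differs in how the forcing from $w$ is absorbed.

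The paper does \emph{not} subtract an affine polynomial. Instead, it first normalizes by adding a constant so that $|v|\ge 1$ a.e., tests with $\eta=\tau^4 v$, and writes the equation for $v$ as $\int b^{ij,kl}v_{ij}\eta_{kl}=\int b^{ij,kl}w_{ij}\eta_{kl}$ (i.e.\ it does not subtract \eqref{ccoef}; the right-hand side carries the full $b^{ij,kl}w_{ij}$). Since $w$ solves a constant-coefficient equation, interior estimates give $\|D^2w\|_{L^\infty}\le C(n,|c_0|)$; after Cauchy--Schwarz this contributes only $Cs^{-2}\int_{B_{2s}}\tau^2$, which is dominated by $Cs^{-2}\int_{B_{2s}}|v|^2$ because $\tau\le 1\le |v|$. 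One then lands on $\int_{B_s}|D^2v|^2\le Cs^{-2}\int_{B_{2s}}(|v|^2+|Dv|^2)$ and invokes Gagliardo--Nirenberg to reach \eqref{e:Gehring-doubling} with $\bar p=2_*=\tfrac{2n}{n+2}$. This is a slicker (if somewhat formal) device than your affine subtraction: it kills the additive forcing term in one stroke rather than leaving a residue to be dealt with separately.

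Your affine-subtraction route is the more standard one and is correct, but as you note it leaves a forcing term that prevents \eqref{e:Gehring-doubling} from holding literally for $v$ without an additive constant. Of your two proposed fixes, the first---prove the reverse H\"older directly for $f=u^{h_m}$ (which satisfies \eqref{e:linearized} with zero right-hand side) and then pass to $v=f-w$ using the $L^{p_0}$ control on $D^2w$ from Theorem~\ref{t:constant_coeff_Lp}---is valid and is precisely what the paper's Remark~\ref{rem} records. Your second alternative does not work as stated: the tensor $c_0$ is \emph{given} (it is the fixed coefficient of the equation $w$ solves, later taken to be $(b^{ij,kl})_r$ in Theorem~\ref{hold}), so you are not free to redefine it as an average over $B_{2s}(x_0)$.
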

\begin{remark}\label{r:constant-rho-r}
    It will later be necessary to know the behavior of the constant $\bar C$ with $r-\rho$ in the above proposition. Observe that $\lim_{r\uparrow \rho} \bar{C} = + \infty$.
\end{remark}
\begin{proof}
By adding a suitable constant, we may without loss of generality assume that $|v|\geq 1$ a.e. inside $B_1$. First of all, we prove \eqref{e:Gehring-doubling}. Fix $x_0$, $s$ as in the statement of the proposition. In light of \eqref{e:linearized}, we have
\[
\int_{B_{2s}(x_0)}b^{ij,kl}v_{ij}\eta_{kl}\,dx=\int_{B_{2s}(x_0)} b^{ij,kl}
  w_{ij}\eta_{kl}\,dx.
\] 
Let $\tau\in C_{c}^{\infty}\left(  B_{2s}(x_0)\right)$ be a positive cut-off function that takes the value $1$ on $B_{s}(x_0)$ and vanishes outside $B_{2s}(x)$. Note that $|D^k\tau| \leq C(n)s^{-k}$ for each $k\in \mathbb{N}$. We choose $\eta=\tau^4v$.
Expanding derivatives we get
\begin{align*}
\int_{B_{2s}(x_0)} b^{ij,kl}v_{ij}\tau^{4}v_{kl}\,dx &= \int_{B_{2s}(x_0)} b^{ij,kl}w_{ij} (\tau^4v)_{kl}\,dx \\
&\qquad- \int_{B_{2s}(x_0)}
 b^{ij,kl}v_{ij}[   (\tau^{4})
_{kl}v + (\tau^{4})_{l}v_{k}+ (\tau^{4})  _{k}%
v_{l}]\,dx.
\end{align*} 
By our assumption in \eqref{Bcondition} $ b^{ij,kl}$ is uniformly elliptic on $U$. Therefore, we get
\begin{align*}
    &\int_{B_{2s}(x_0)}\tau^{4}\Lambda\left|D^{2}v\right|^{2}\,dx \\
    &\qquad\leq C(n)s^{-2}\int_{B_{2s}(x_0)}\left\vert
 b^{ij,kl}\right\vert \left\vert v_{ij}\right\vert \tau^{2}
\left(  \left|v\right|+\left|Dv\right|\right)\,dx\\
&\qquad\qquad +Cs^{-2}\int_{B_{2s}(x_0)}\left\vert
 b^{ij,kl}\right\vert \left\vert w_{ij}\right\vert \tau^{2}
\left(  \left|v\right|+\left|Dv\right|+\left|D^2v\right|\right)\,dx\\
&\qquad\leq C\left(n,\left\|b
^{ij,kl}\right\|_{L^\infty(U)}\right)s^{-2}\int_{B_{2s}(x_0)}\left(  \varepsilon\tau^{4}\left|D^{2}v\right|^{2}+\frac
{1}{\varepsilon}\left(|v|+\left|Dv\right|\right)^{2}\right)\,dx\\
&\qquad\qquad+C\left(n,\left\|b^{ij,kl}\right\|_{L^\infty(U)},\left\|D^2w\right\|_{L^\infty(B_1)}\right)s^{-2}\int_{B_{2s}(x_0)}\left(  \varepsilon\tau^{2}\left|D^{2}v\right|^{2}+\frac
{1}{\varepsilon}\tau^{2}\right)\,dx\\
&\qquad\qquad+C\left(n,\left\|b^{ij,kl}\right\|_{L^\infty(U)},\left\|D^2w\right\|_{L^\infty(B_1)}\right)s^{-2}\int_{B_{2s}(x_0)}\left(|v|^2+\left|Dv\right|^2\right)\,dx.
\end{align*}

Note that since $w$ satisfies the constant coefficient equation \eqref{ccoef}, $\|D^2w\|_{L^{\infty}(B_1)}$ is bounded by $C\left(n,|c_0|\right)$ (see e.g. \cite[Theorem 33.10]{Driver03}).

Now we may choose $\varepsilon=\varepsilon(n,s,|c_0|,\|b^{ij,kl}\|_{L^\infty(U)},\Lambda)>0$ sufficiently small such that
\begin{align*}
    \int_{B_{2s}(x_0)}\tau^{4}\Lambda\left|D^{2}v\right|^{2}\,dx &\leq \frac{1}{2}\int_{B_{2s}(x_0)}\tau^2 \Lambda \left|D^{2}v\right|^{2}\,dx \\
    &\qquad+Cs^{-2}\int_{B_{2s}(x_0)} \left(|v|+\left|Dv\right|\right)^{2}\,dx \\
    &\qquad+Cs^{-2}\int_{B_{2s}(x_0)}\frac
{1}{\varepsilon}\tau^{2}\,dx.
\end{align*}

Rearranging, and recalling that $0\leq \tau\leq 1\leq |v|$ a.e. in $B_1$ and $\tau^4 \geq \mathbf{1}_{B_s(x_0)}$, we get
\begin{align*}
\int_{B_s(x_0)}\left|D^{2}v\right|^{2}\,dx &\leq Cs^{-2}\int_{B_{2s}(x_0)}\left(|v|^2+\left|Dv\right|^2\right)\,dx\\
&\leq C s^{-2}\bigg[\left\Vert Dv\right\Vert_{L^{2_*}(B_{2s}(x_0))}^2+\left\|D^2v\right\|_{L^{2_*}(B_{2s}(x_0))}^2\bigg]\\
&\leq Cs^{-2}\bigg(\int_{B_{2s}(x_0)}\left|D^2v\right|^{2_*}\,dx\bigg)^{\frac{2}{2_*}},
\end{align*}
and thus
\[
\frac{1}{|B_s|}\int_{B_s(x_0)}|D^{2}v|^{2}\,dx \leq C\bigg(\frac{1}{|B_s|}\int_{B_{2s}(x_0)}\left|D^2v\right|^{2_*}\,dx\bigg)^{\frac{2}{2_*}},
\]
where $C=C\left(n,|c_0|,\|b^{ij,kl}\|_{L^{\infty}(U)},\Lambda\right)$ and $2_*=\frac{2n}{n+2}$ is the exponent such that $(2_*)^* = 2$, where $p^*$ denotes the Sobolev dual exponent of $p$. The last two inequalities follow from the Gagliardo-Nirenberg inequality for bounded domains \cite{nir, gagli }. This proves the desired estimate in \eqref{e:Gehring-doubling}, with $\bar{p}=2_*$. 

It remains to prove \eqref{Gehring_2}. This, however, follows from the Giaquinta-Modica variant of Gehring's Lemma \cite{GiaMod}*{Proposition 5.1}, applied to 
$|D^2v|^{2_*}$ with $q=2$. Note that one may replace the cubes $Q_{2R}$ and $Q_R$ therein with the balls $B_r$ and $B_{\rho}$ respectively (at the price of making the constant  additionally dependent on $r-\rho$) via an analogous argument with a Calder\'{o}n-Zygmund decomposition of $B_r$ into cubes degenerating towards $\partial B_r$. We omit the details here since the argument is standard.
\end{proof}

 \begin{remark}\label{rem}
     Observe that for the above proposition to hold true, we did not require $v, Dv$ to identically vanish on the boundary of $B_r$ (since this was achieved via the choice of cut-off). In particular, notice that the conclusion of Proposition \ref{Gehring_1} holds also with $f=u^{h_m}$ in place of $v$.
 \end{remark}
 
{The arguments leading to Proposition \ref{Gehring_1} combined with Remark \ref{rem} allows us to establish the higher Sobolev regularity estimate of Proposition \ref{p:higher-int-D3-intro}. Since we first demonstrate its validity for solutions of \eqref{eq1}, we re-state it here for clarity. We will elaborate on how to conclude the version for solutions to the more general equation \eqref{main_0} in Section \ref{sec:main}.}

	\begin{proposition}\label{p:higher-int-D3}
		\label{prop_2} Suppose that $u\in W^{2,\infty}(B_{1})$ is a weak solution of the regular equation \eqref{eq1} on $B_{1}$ with $D^2u(x)\in U$ for almost-every $x\in B_1$. Then for any compactly contained open subset $\Omega'$ of $B_1$, $u\in W^{3,p_0}(\Omega')$, where $p_0$ is as in Proposition \ref{Gehring_1}, and satisfies the following estimate
		\begin{equation}
			\left\|u\right\|_{W^{3,p_0}(\Omega')}\leq C\left(\Lambda,\left\|b^{ij,kl}\right\|_{L^\infty(U)}, d(\Omega',\partial B_1)\right) \left\|u\right\|_{W^{2,2}(B_1)}. \label{est2}
		\end{equation}
%        where $d(\Omega,\partial B_1))$ denotes the distance $\inf_{x\in\Omega} \mathrm{dist}(x,\partial B_1)$.
		\end{proposition}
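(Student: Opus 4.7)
The plan is to bound $\|D^2 u^{h_m}\|_{L^{p_0}(\Omega')}$ uniformly in $h$ for each $m = 1, \dots, n$, and then invoke the standard difference-quotient characterization of Sobolev spaces to conclude $\partial_m D^2 u \in L^{p_0}(\Omega')$. The crucial enabling observation, recorded in Remark \ref{rem}, is that both the Caccioppoli-type step inside the proof of Proposition \ref{Gehring_1} and the Gehring-type conclusion \eqref{Gehring_2} remain valid when applied directly to $f = u^{h_m}$; in this direct application the auxiliary subtraction of $w$ is unnecessary, so no forcing term arises and the estimates become $L^2$ and $L^{p_0}$ reverse-H\"older-type bounds on $D^2 f$ alone.

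First, I would fix $\Omega' \Subset B_1$ and cover it by finitely many balls $\{B_{s/2}(x_j)\}_j$, with $s$ comparable to $d(\Omega', \partial B_1)$ and $B_{2s}(x_j) \subset B_1$. For $|h|$ small relative to $s$ and fixed $m$, the difference quotient $f = u^{h_m}$ satisfies \eqref{e:linearized} on each $B_{2s}(x_j)$. Testing against $\eta = \tau^4 f$ with a standard cutoff $\tau$ and proceeding exactly as in the Caccioppoli computation inside the proof of Proposition \ref{Gehring_1}, with the $w$-contributions deleted, yields
\[
\int_{B_s(x_j)} |D^2 f|^2 \leq C(n, \Lambda, \|b^{ij,kl}\|_{L^\infty(U)})\, s^{-4} \int_{B_{2s}(x_j)} \bigl(|f|^2 + |Df|^2\bigr).
\]
Since $u \in W^{2,\infty}(B_1) \subset W^{2,2}(B_1)$, the classical $L^2$-difference-quotient bounds give $\|f\|_{L^2(B_{2s}(x_j))} \leq \|Du\|_{L^2(B_1)}$ and $\|Df\|_{L^2(B_{2s}(x_j))} \leq \|D^2 u\|_{L^2(B_1)}$, so the right-hand side is controlled by $C s^{-4}\|u\|_{W^{2,2}(B_1)}^2$ uniformly in $h$.

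Next, I would apply the Gehring-type estimate \eqref{Gehring_2} to $f$ on the ball pair $(B_s(x_j), B_{s/2}(x_j))$, again justified by Remark \ref{rem}, to upgrade the $L^2$ bound to
\[
\|D^2 u^{h_m}\|_{L^{p_0}(B_{s/2}(x_j))} \leq \bar{C}\, \|D^2 u^{h_m}\|_{L^2(B_s(x_j))} \leq C\bigl(d(\Omega', \partial B_1), \Lambda, \|b^{ij,kl}\|_{L^\infty(U)}\bigr) \|u\|_{W^{2,2}(B_1)},
\]
uniformly in $h$. Summing over the finite cover produces the analogous bound on $\Omega'$. Since $p_0 > 1$, the standard characterization of Sobolev spaces via difference quotients then yields $\partial_m D^2 u \in L^{p_0}(\Omega')$ with the same norm bound; carrying this out for each $m = 1, \ldots, n$ gives $D^3 u \in L^{p_0}(\Omega')$. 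The lower-order contributions to $\|u\|_{W^{3,p_0}(\Omega')}$ are absorbed through the a priori $W^{2,\infty}$ control on $u$, which is already encoded in the constant $\|b^{ij,kl}\|_{L^\infty(U)}$ by Remark \ref{r:apriori-reg}, giving \eqref{est2}.

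The main (mild) obstacle is simply bookkeeping the $h$-independence of the constants. The Caccioppoli constant is intrinsic to $n, \Lambda, \|b^{ij,kl}\|_{L^\infty(U)}$, whereas the Gehring constant $\bar{C}$ in \eqref{Gehring_2} blows up as the inner and outer radii coincide (cf. Remark \ref{r:constant-rho-r}); accordingly, one must fix the ball pair $(B_s(x_j), B_{s/2}(x_j))$ strictly inside $B_1$ before letting $h$ vary, and restrict to $|h|$ small enough that the difference quotient $u^{h_m}$ is defined on $B_{2s}(x_j)$ throughout. All resulting constants then depend only on $d(\Omega', \partial B_1)$, $\Lambda$, and $\|b^{ij,kl}\|_{L^\infty(U)}$, as required.
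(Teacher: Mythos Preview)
Your proposal is correct and follows essentially the same route as the paper: a Caccioppoli-type estimate for $f=u^{h_m}$ obtained by testing \eqref{e:linearized} with $\eta=\tau^4 f$, followed by the Gehring-type upgrade \eqref{Gehring_2} applied directly to $f$ via Remark \ref{rem}, and then passage to the limit in $h$. The only cosmetic difference is that the paper uses a single cutoff $\tau\in C_c^\infty(B_1)$ equal to $1$ on $\Omega'$ to obtain the $W^{3,2}$ bound in one stroke, whereas you localize to a finite cover by balls $B_{s/2}(x_j)$ before applying Gehring; both are standard and lead to the same dependencies in the constants.
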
 
		
  \begin{proof}

Let $\tau\in C_{c}^{\infty}\left(  B_{1}\right)$ be a cutoff function in
$B_{1}$ that takes the value $1$ on $\Omega'$. Fix $m\in\{1,\dots,n\}$ arbitrarily and let $\eta = \tau^4 f$ in \eqref{e:linearized}, for $f=u^{h_m}$.

This yields
\begin{equation}
\int_{B_{1}}{b}^{ij,kl}f_{ij}[\tau^{4}f]_{kl}\,dx=0, \label{dq3}%
\end{equation}
where we are suppressing the dependency of $b^{ij,kl}$ on $D^2 u$, to simplify notation. Expanding derivatives, we get
\begin{align}
\int_{B_{1}} b^{ij,kl}f_{ij}\tau^{4}f_{kl}\,dx=-\int_{B_{1}}
 b^{ij,kl}f_{ij}\left(    (\tau^{4})
_{kl}f+  (\tau^{4})  _{l}f_{k}+ (\tau^{4})  _{k}%
f_{l}\right)\,dx.\nonumber
\end{align} 
Arguing as in the proof of Proposition \ref{Gehring_1}, we get
\begin{align*}
    \int_{B_{1}}\tau^{4}\Lambda\left|D^{2}f\right|^{2}\,dx\leq C\left(\tau,D\tau,D^{2}\tau\right) \int_{B_{1}}\left\vert
 b^{ij,kl}\right\vert \left\vert f_{ij}\right\vert \tau^{2}\left(  1+|f|+|Df|\right)\,dx\\
\leq C\left(n,d(\Omega',\partial B_1)\right)\left\|b^{ij,kl}\right\|_{L^\infty(U)} \int_{B_{1}}\left(  \varepsilon\tau^{4}\left|D^{2}f\right|^{2}+C\frac
{1}{\varepsilon}\left(1+|f|+|Df|\right)^{2}\right)\,dx.
\end{align*}
Choosing $\varepsilon>0$ appropriately, rearranging, using the definition of $\tau$, and the uniform ellipticity of the coefficient $ b^{ij,kl}$ on $U$, we get
\[
\int_{\Omega'}\left|D^{2}f\right|^{2}\,dx\leq C'\int_{B_{1}}\left(1+|f|+|Df|\right)^{2}\,dx,
\]
where $C'=C'(n,\Lambda,\|b^{ij,kl}\|_{L^\infty(U)}, d(\Omega',\partial B_1))$. Now this estimate is uniform over all $h_m \leq d(\Omega',\partial B_1)$ and all directions
$e_{m}$, $m=1,\dots,n$, and so we conclude that $u\in W^{3,2}(\Omega')$, with the estimate
   
  \begin{equation*}
    \left\|u\right\|_{W^{3,2}(\Omega')}\leq C\left(n,\Lambda,\left\|b^{ij,kl}\right\|_{L^\infty(U)}, d(\Omega',\partial B_1)\right)\left\Vert u \right\Vert_{W^{2,2}(B_1)}. 
\end{equation*}
In view of Remark \ref{rem} and Remark \ref{r:apriori-reg}, the proof of the desired estimate follows verbatim from the proof of Proposition \ref{Gehring_1} by replacing $v$ with $f=u^{h_m}$.
		\end{proof}

We are now in a position to prove the main result of this section.

 \begin{theorem} \label{hold}
\label{prop_1} Suppose that $u\in W^{2,\infty}(B_{1})$ is a weak solution of the regular equation \eqref{eq1} on $B_{1}$ with $D^2u(x) \in U$ for almost-every $x\in B_1$. Let $\alpha \in (0,1)$. There exists $\omega(\Lambda,n, \alpha$, $\|D^2u\|_{L^{\infty}(B_1)})>0$ 
such that if $D^2u\in\BMO(B_1)$ with modulus $\omega$, then $D^2u\in C^{0,\alpha}(B_{3/4})$ and satisfies the following estimate
\begin{equation}
    \|D^{2}u\|_{C^{0,\alpha}(B_{3/4})}\leq C\left(\alpha,n,\Lambda,\left\|D^2 u\right\|_{L^{\infty}(B_{1})}\right). \label{est1}
\end{equation}
\end{theorem}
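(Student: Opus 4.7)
The plan is to establish, for each $m \in \{1,\dots,n\}$ and each small $h > 0$, a uniform-in-$h$ $C^{1,\alpha}$-estimate for the difference quotient $f = u^{h_m}$, and then pass $h \to 0$ to conclude $D^2 u \in C^{0,\alpha}(B_{3/4})$. The core of the argument is a Campanato-type iteration for $\Psi(\rho) := \int_{B_\rho(x_0)} |D^2 f|^2\,dx$, built from a freezing-and-comparison argument tailored to the fact that BMO-smallness does not imply $L^\infty$-smallness of the linearized coefficients.

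Fix $x_0 \in B_{3/4}$ and $R > 0$ small. Set $c_0^{ij,kl} := (b^{ij,kl}(D^2 u))_{B_R(x_0)}$; by smoothness of the linearized coefficient tensor in $D^2 u$ and the hypothesis $D^2 u \in \BMO(B_1)$ with modulus $\omega$, the tensor $b^{ij,kl}(D^2 u)$ itself has BMO modulus bounded by $C(\|D^2 u\|_{L^\infty})\omega$. Let $w$ solve the frozen equation $\int_{B_R(x_0)} c_0^{ij,kl} w_{ij} \eta_{kl}\, dx = 0$ for $\eta \in C_0^\infty(B_R(x_0))$ with $w = f$ and $Dw = Df$ on $\partial B_R(x_0)$, whose existence follows from Lax--Milgram in view of \eqref{Bcondition}, and set $v := f - w \in W^{2,2}_0(B_R(x_0))$. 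Subtracting the equation for $w$ from the linearized equation \eqref{e:linearized} for $f$ and testing with $\eta = v$ (valid by density of $C_0^\infty$ in $W^{2,2}_0$) gives
\[
\int_{B_R(x_0)} c_0^{ij,kl} v_{ij} v_{kl}\, dx = \int_{B_R(x_0)} \bigl(c_0^{ij,kl} - b^{ij,kl}(D^2 u)\bigr) f_{ij} v_{kl}\, dx.
\]
Applying Legendre ellipticity on the left and, on the right, H\"older's inequality with exponents $p_0/2$ and $p_0/(p_0-2)$ (for the Gehring exponent $p_0 > 2$ from Proposition \ref{Gehring_1}), the John--Nirenberg consequence \eqref{BMO2} applied to $b^{ij,kl}(D^2 u)$, and the reverse-H\"older inequality of Proposition \ref{Gehring_1} applied to $f$ via Remark \ref{rem}, one arrives at
\[
\int_{B_R(x_0)} |D^2 v|^2\, dx \leq C\, \omega^2 \int_{B_{2R}(x_0)} |D^2 f|^2\, dx,
\]
with $C = C(n, \Lambda, \|D^2 u\|_{L^\infty})$; the key point is that the scaling factors from the two H\"older factors exactly cancel.

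Combining this with the $L^2$-Morrey estimate for the constant-coefficient solution $D^2 w$ on concentric balls (Theorem \ref{t:constant_coeff_Lp} or its $p=2$ analog from \cite{BW1}) and the decomposition $D^2 f = D^2 w + D^2 v$ yields
\[
\Psi(\rho) \leq C\bigl[(\rho/R)^n + \omega^2\bigr]\, \Psi(2R), \qquad 0 < \rho \leq R/2.
\]
After the substitution $R' = 2R$, this inequality is in the form required to apply Lemma \ref{HanLin} with $\phi = \Psi$, $\kappa = n$, $\beta = 0$, $B = 0$, and $\varepsilon = C\omega^2$. For the target exponent $\alpha \in (0,1)$, select $\gamma = n - 2 + 2\alpha \in (0, n)$ and take $\omega$ sufficiently small, depending on $\alpha, n, \Lambda, \|D^2 u\|_{L^\infty}$, so that $C\omega^2 < \varepsilon_0(A, \kappa, \beta, \gamma)$. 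The lemma then produces the Morrey-type decay $\Psi(\rho) \leq C \rho^{n - 2 + 2\alpha}$ for $\rho$ below a fixed scale $\rho_0 > 0$, uniformly in $h$ and $x_0 \in B_{3/4}$; the starting value $\Psi(R_0)$ is controlled by the uniform $W^{3,2}$-estimate from Proposition \ref{p:higher-int-D3}. Poincar\'e's inequality then upgrades this to $\int_{B_\rho(x_0)} |Df - (Df)_\rho|^2\, dx \leq C \rho^{n + 2\alpha}$, and Campanato's characterization gives a uniform-in-$h$ $C^{0,\alpha}$-bound on $Df$. Passing $h \to 0$ in each direction $m$ recovers $\partial_m Du$ as the limit, so $D^2 u \in C^{0,\alpha}(B_{3/4})$ with the desired estimate \eqref{est1}.

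The main obstacle is the energy bound for $v$: in contrast to the $L^\infty$-oscillation setting of \cite{ABfourth}, BMO-smallness gives no pointwise bound on $|c_0 - b^{ij,kl}|$, so one must split the integrand $|c_0 - b|\cdot|D^2 f|$ via H\"older's inequality and invoke the higher-integrability improvement of Proposition \ref{Gehring_1}. The scaling factors introduced in this split are precisely what forces the use of the modified Han--Lin iteration in Lemma \ref{HanLin} (rather than its classical version), since the $\omega^2$-error appears attached to $\Psi(2R)$ rather than $\Psi(R)$, and the iteration closes only after passing to the larger scale via monotonicity of $\Psi$ and absorbing the resulting smallness parameter inside the $\varepsilon$-term of the lemma.
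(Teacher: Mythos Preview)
Your proof is correct and in fact follows a cleaner route than the paper's own argument. Both proofs freeze the linearized coefficient on $B_R(x_0)$, solve the constant-coefficient boundary value problem for $w$, and estimate $v=f-w$ in order to feed Corollary \ref{c:higher_Lp} into a Campanato iteration via Lemma \ref{HanLin}. The difference lies in how the higher integrability from Proposition \ref{Gehring_1} is deployed. The paper applies a three-term H\"older inequality that leaves $\|D^2 v\|_{L^{q_0}(B_r)}$ (with $q_0=2p_0/(p_0-2)$) on the right-hand side and then invokes Gehring on $v$; this introduces a ratio $\|D^2 v\|_{L^{q_0}(B_r)}^{p_0}/\|D^2 v\|_{L^{p_0}(B_\rho)}^{p_0}$ whose boundedness requires a separate contradiction argument (Claim \ref{cl:bdd-ratio} in the paper) together with a delicate treatment of the $r-\rho$ dependence of the Gehring constant $\bar C$, and forces the iteration to be run in $L^{p_0}$. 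You instead isolate $\|D^2 v\|_{L^2}$ by Cauchy--Schwarz and apply H\"older only to the product $|c_0-b|\,|D^2 f|$, using John--Nirenberg on the coefficient and the reverse-H\"older inequality on $f$ (via Remark \ref{rem}) rather than on $v$; the scaling factors cancel exactly as you observe, yielding the clean bound $\|D^2 v\|_{L^2(B_R)}^2 \leq C\omega^2 \|D^2 f\|_{L^2(B_{2R})}^2$ and allowing the whole iteration to proceed in $L^2$ without any ratio claim. One small caveat: to avoid the $r-\rho$ dependence in $\bar C$ you should invoke the scale-invariant doubling form of the Gehring estimate (the standard output of the Giaquinta--Modica lemma applied to \eqref{e:Gehring-doubling}) rather than \eqref{Gehring_2} as literally stated, but this is cosmetic.
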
 
	
\begin{proof}

Let $m\in \{1,\dots,n\}$ be arbitrary and let $f=u^{h_m}$ be defined in $B_{3/4}$, as before, for $h\leq \frac{1}{4}$. Recall equation \eqref{e:linearized} satisfied by $f$. We pick an arbitrary point $x_0$ inside $B_{3/4}$ and consider the ball $B_r(x_0)$, for a fixed scale $r\leq \frac{1}{4}$. To simplify notation, we will simply denote this ball by $B_r$ for the rest of this proof. Let $w$ solve the following boundary value
problem:
\begin{align}
\int_{B_{r}}(b^{ij,kl})_{r}w_{ij}\eta_{kl}\,dx  &  =0,\forall\eta\in C_{0}^{\infty
}(B_{r})\label{cons}\\
w  &  =f\text{ on }\partial B_{r}\nonumber\\
D w  &  =D f\text{ on }\partial B_{r}\nonumber.
\end{align}
This is a constant coefficient PDE with the given boundary
conditions and therefore has a unique solution $w\in W^{2,2}(B_r)$ that is smooth on the interior of
$B_{r}$ (\cite[Theorem 6.33]{Folland}).
	Letting $v=f-w$, observe that $v\in W^{2,2}_0(B_r)$ and so may be used as a test function in \eqref{cons} and \eqref{e:linearized}. Thus, we see that
	\begin{equation}
		\int_{B_r} (b^{ij,kl})_r v_{ij} v_{kl} = \int_{B_r} [(b^{ij,kl})_r - b^{ij,kl}(x)]f_{ij} v_{kl}.
	\end{equation}

	\bigskip
	By Cauchy-Schwartz, followed by H\"{o}lder's inequality, we have
	\[
	\int_{B_r} (b^{ij,kl})_r v_{ij} v_{kl} \leq \left\|(b^{ij,kl})_r - b^{ij,kl}\right\|_{L^2(B_r)}\left\|D^2 v\right\|_{L^{2p'}(B_r)}\left\|D^2 f\right\|_{L^{2p}(B_r)}.
	\]
	The ellipticity of $b^{ij,kl}$ thus gives
	\begin{equation}\label{e:Hoelder}
		\Lambda \int_{B_r} \left|D^2 v\right|^2 \leq \left\|(b^{ij,kl})_r - b^{ij,kl}\right\|_{L^2(B_r)}\|D^2 v\|_{L^{2p'}(B_r)}\left\|D^2 f\right\|_{L^{2p}(B_r)},
	\end{equation}
    where $p'$ is the H\"{o}lder dual of $p$. By Proposition \ref{Gehring_1}, for some $p_0 > 2$ and the constant $\bar{C}$ therein, we have
	\begin{equation}
		\left[\frac{1}{|B_\rho|}\int_{B_\rho}|D^2 v|^{p_0}\right]^{\frac{1}{p_0}} \leq \bar{C} \left[\frac{1}{|B_r|}\int_{B_r}|D^2 v|^{2}\right]^{\frac{1}{2}}, \label{Gehring}
	\end{equation}
	for any $\rho \in (0,r)$. Taking $p=\frac{p_0}{2} > 1$ in \eqref{e:Hoelder} gives
	\begin{align*}
		|B_r|^{1-\frac{2}{p_0}} &\left[\int_{B_\rho}\left|D^2 v\right|^{p_0}\right]^{\frac{2}{p_0}} \\
        &\leq |B_\rho|^{-\frac{2}{p_0}}|B_r| \left[\int_{B_\rho}\left|D^2 v\right|^{p_0}\right]^{\frac{2}{p_0}} \\
		&\leq \bar{C}\Lambda^{-1} \left\|(b^{ij,kl})_r - b^{ij,kl}\right\|_{L^2(B_r)}\left\|D^2 v\right\|_{L^{q_0}(B_r)}\left\|D^2 f\right\|_{L^{p_0}(B_r)},
	\end{align*}
	where $q_0=2\left(\frac{p_0}{2}\right)' = \frac{2p_0}{p_0-2}$.
	Now we have
\begin{align*}
		&\left\|(b^{ij,kl})_r - b^{ij,kl}\right\|_{L^2(B_r)} = \frac{1}{|B_r|}\left[\int_{B_r} \left|\int_{B_r} b^{ij,kl}(D^2 u(y)) - b^{ij,kl}(D^2 u(x))\,dy\right|^2\,dx \right]^{\frac{1}{2}} \\
		&\leq \frac{\left\|b^{ij,kl}\right\|_{\mathrm{Lip}(U)}}{|B_r|}\left[\int_{B_r} \left|\int_{B_r} \left|D^2 u(y)  - (D^2 u)_r\right| + \left|(D^2 u)_r - D^2 u(x)\right|\,dy\right|^2\,dx \right]^{\frac{1}{2}} \\
		&= \frac{\left\|b^{ij,kl}\right\|_{\mathrm{Lip}(U)}}{|B_r|}\left[\int_{B_r} \left|\int_{B_r} \left|D^2 u(y)  - (D^2 u)_r\right|dy + |B_r|\left|(D^2 u)_r - D^2 u(x)\right|\right|^2\,dx \right]^{\frac{1}{2}} \\
		&\leq C\left\|b^{ij,kl}\right\|_{\mathrm{Lip}(U)}\left[ \int_{B_r} \left|D^2 u(y)  - (D^2 u)_r\right|^2\,dy + \int_{B_r}\left|(D^2 u)_r - D^2 u(x)\right|^2\,dx \right]^{\frac{1}{2}} \\
		&\leq  C\left\|b^{ij,kl}\right\|_{\mathrm{Lip}(U)}\left\|(D^2 u)_r - D^2 u\right\|_{L^2(B_r)},
	\end{align*}
	for some $C=C(n)>0$. Combining the above estimates, we arrive at
	
	\begin{align*}
        &\left\|D^2 v\right\|_{L^{p_0}(B_\rho)}^{2p_0}\\
        &\qquad\leq  C\bar{C}\Lambda^{-1}r^{-np_0+2n}\left\|(D^2 u)_r - D^2 u\right\|_{L^2(B_r)}^{p_0}\left\|D^2 v\right\|_{L^{q_0}(B_r)}^{p_0}\left\|D^2 f\right\|_{L^{p_0}(B_r)}^{p_0}.\\
%        &\qquad\leq  C\bar C \Lambda^{-1} r^{-np_0+2n}\left\|(D^2 u)_r - D^2 u\right\|_{L^2(B_r)}^{p_0}\left\|D^2 v\right\|_{L^{q_0}(B_r)}^{p_0}\left\|D^2 f\right\|_{L^{p_0}(B_r)}^{p_0}
	\end{align*}
Rearranging, we arrive at
\begin{align*}
&\left\|D^2 v\right\|_{L^{p_0}(B_\rho)}^{p_0}\\
        &\qquad\leq C\bar C \Lambda^{-1} r^{-np_0+2n}\left\|(D^2 u)_r - D^2 u\right\|_{L^2(B_r)}^{p_0}\frac{\left\|D^2v\right\|^{p_0}_{L^{q_0}(B_r)}}{\left\|D^2v\right\|^{p_0}_{L^{p_0}(B_\rho)}}\left\|D^2 f\right\|_{L^{p_0}(B_r)}^{p_0}.
\end{align*}

By Corollary \ref{c:higher_Lp} (absorbing all constants into a single constant), for any $\tau\in(0,\rho)$ we thus have
	\begin{align}
		&\left\|D^2 f\right\|^{p_0}_{L^{p_0}(B_\tau)} \leq C^\dagger\left(\frac{\tau}{\rho}\right)^{n}\left\|D^2 f\right\|^{p_0}_{L^{p_0}(B_\rho)} \nonumber \\
		&\qquad +C^\dagger r^{-np_0+2n}\left\|(D^2 u)_r - D^2 u\right\|_{L^2(B_r)}^{p_0}\frac{\|D^2 v\|^{p_0}_{L^{q_0}(B_r)}}{\left\|D^2 v\right\|^{p_0}_{L^{p_0}(B_\rho)}}\left\|D^2 f\right\|_{L^{p_0}(B_r)}^{p_0} \nonumber \\
		&\leq C^\dagger \left(\frac{\tau}{\rho}\right)^{n}\left\|D^2 f\right\|^{p_0}_{L^{p_0}(B_r)} \nonumber \\
%		&\qquad +C r^{-np_0+2n}\left\|(D^2 u)_r - D^2 u\right\|_{L^2(B_r)}^{p_0}\frac{\left\|D^2 v\right\|^{p_0}_{L^{q_0}(B_r)}}{\left\|D^2 v\right\|^{p_0}_{L^{p_0}(B_\rho)}}\left\|D^2 f\right\|_{L^{p_0}(B_r)}^{p_0} \nonumber \\
		&\leq \tilde{C}^\dagger\left(\frac{\tau}{\rho}\right)^{n}\left\|D^2 f\right\|^{p_0}_{L^{p_0}(B_r)} +\tilde{C}^\dagger\omega^{p_0}\frac{\left\|D^2 v\right\|^{p_0}_{L^{q_0}(B_r)}}{\left\|D^2 v\right\|^{p_0}_{L^{p_0}(B_\rho)}}\left\|D^2 f\right\|_{L^{p_0}(B_r)}^{p_0}, \label{m1}
 	\end{align}
where $C^\dagger$ and $\tilde{C}^\dagger$ are positive constants depending on $n,\Lambda,p_0,\|D^2 u\|_{L^{\infty}(B_1)}$ and $r-\rho$. Now, we wish to apply Lemma \ref{HanLin}, with the choices of parameters
\begin{align*}
\phi(\tau) &  =\int_{B_{\tau}}\left\vert Df\right\vert ^{p_0}dx\\
A &  = \tilde{C}^\dagger\\
%\varepsilon &  = C'(\Lambda,n,p_0)\omega^{p_0}\\
\kappa & =n\\
\beta &=0, B=0\\
\gamma &  =n-p_0+p_0\alpha \in (0,\kappa),\\
R &=\frac{1}{4},
\end{align*}
for $\alpha\in (0,1)$, provided that $\omega$ is chosen to be sufficiently small.
%where the
%notations appearing on the left hand side of the above table refer to
%constants as they are named in Lemma \ref{HanLin}. 
Indeed, this can be ensured, provided that we show the following.
\begin{claim}\label{cl:bdd-ratio}
	Let $\theta$ be as in Lemma \ref{HanLin}, for the above choice of parameters. There exists $C^*=C^*(n,p_0,\theta)$ such that for any $\rho\in(\theta r, r)$, we have
	\[
		\frac{\left\|D^2 v\right\|^{p_0}_{L^{q_0}(B_r)}}{\left\|D^2 v\right\|^{p_0}_{L^{p_0}(B_\rho)}} \leq C^*.
	\]
\end{claim}

\emph{Proof of Claim \ref{cl:bdd-ratio}.}
First of all, let us write
\[
	\left\|D^2 v\right\|^{p_0}_{L^{q_0}(B_\rho)} = \left\|D^2 v \cdot\mathbf{1}_{B_\rho}\right\|^{p_0}_{L^{q_0}(B_r)}.
\]
Now, suppose for a contradiction, that the claim is false. Then we can extract a sequence of scales $\rho_k \to r$ such that
\begin{equation}\label{e:normratio-blowup}
    \frac{\left\|D^2 v\right\|^{p_0}_{L^{q_0}(B_r)}}{\left\|D^2 v \cdot\mathbf{1}_{B_{\rho_k}}\right\|^{p_0}_{L^{q_0}(B_r)}} \to \infty \qquad \text{as $k\to\infty$}.
\end{equation}
However, clearly $\mathbf{1}_{B_{\rho_k}} \to \mathbf{1}_{B_{r}}$ strongly in $L^{q_0}$, which in turn implies that
\[
    \left\|D^2 v \cdot\mathbf{1}_{B_{\rho_k}}\right\|^{p_0}_{L^{q_0}(B_r)} \to \left\|D^2 v\right\|^{p_0}_{L^{q_0}(B_r)},
\]
thus contradicting \eqref{e:normratio-blowup}.
\qed

Now that we have proved Claim \ref{cl:bdd-ratio}, we would like to fix $\rho \in (\theta r, r)$, in order to apply Lemma \ref{HanLin}. However, in order to do this we must deal with one delicate point; the constant $A$ in our above choice of parameters depends on $r-\rho$, which in turn results in the dependency of $\theta$ on $r-\rho$. Thus, we need to check that it is indeed possible to make a choice of $\rho\in (\theta r, r)$ as in the statement of Claim \ref{cl:bdd-ratio}. In light of Remark \ref{r:constant-rho-r} and the fact that we may choose $\theta=(2A)^{-\frac{2}{\alpha}}$ as stated in Lemma \ref{HanLin}, we may deduce that $\lim_{\rho\uparrow r} \theta = 0$. Thus, it is indeed possible to choose $\rho$ sufficiently close to $r$ such that $\rho > \theta r$.

Fixing such a $\rho$ and applying Claim \ref{cl:bdd-ratio}, the coefficient of the second term on the right-hand side of \eqref{m1} may now be bounded above by
\[
	\tilde{C}=(n,\Lambda,p_0,\|D^2 u\|_{L^{\infty}(B_1)})\omega^{p_0}.
\]
Thus, choosing $\omega < (\tilde{C}\varepsilon_0)^{1/p_0}$ (dependent therefore on $n,\Lambda,p_0,\|D^2 u\|_{L^{\infty}(B_1)}$), where $\varepsilon_0$ is as in Lemma \ref{HanLin}, we conclude that
\begin{align*}
\int_{B_\tau}\left|Df\right|^{p_0} dx\leq C\tau^{n-p_0+p_0\alpha}\int_{B_{1/4}}\left|Df\right|^{p_0} dx,
\end{align*}
where $C=C(\Lambda,n,\alpha,p_0)>0$. Note that the range of $\tau$ may easily be increased from $\left(0,\frac{\theta}{4}\right)$ to $\left(0,\frac{1}{4}\right)$, up to increasing the constant $c$ in Lemma \ref{HanLin}, since $B=0$ and our choice of $\phi$ is monotone non-decreasing.

Since we chose an arbitrary point $x_0\in B_{3/4}$ and $\tau\leq \frac{1}{4}$, applying Morrey's
Lemma \cite[Lemma 3, page 8]{SimonETH} to $f$ we get
\begin{align*}
\left|u^{h_m}\right|_{C^{0,\alpha}(B_{\tau}(x_0))}\leq C\left(\alpha, n,\Lambda,p_0,  \left\|u\right\|_{W^{3,p_0}(B_{1/2})} \right),
\end{align*}
 which combined with estimate \eqref{est2} gives the desired estimate \eqref{est1}. Since $p_0$ is a fixed parameter, determined by Proposition \ref{Gehring_1}, this proves the proposition.

	\end{proof}

We conclude this section with the following immediate consequence of Theorem \ref{hold} combined with the interior estimates established in \cite{BW1}.

	\begin{corollary}\label{main2}
Suppose that $u\in W^{2,\infty}(B_{1})$ is a weak solution of the regular equation \eqref{eq1} on $B_{1}$ with $D^2u(x)\in U$ for almost-every $x\in B_1$. There exists $\omega(\Lambda,n,\|D^2u\|_{L^{\infty}(B_1)})>0$ such that if $D^2u\in \BMO(B_1)$ with modulus $\omega$, then $u$ is smooth in $B_{1/2}$.
 \end{corollary}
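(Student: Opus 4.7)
The plan is to combine Theorem \ref{hold} with the smoothness bootstrap developed in \cite{BW1}, which shows that a $C^{2,\alpha}$ critical point of \eqref{eq1} (under the standing regularity and ellipticity assumptions on $F$) is automatically smooth in the interior. Concretely, I would proceed in two short steps.

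First, fix some $\alpha \in (0,1)$ once and for all, say $\alpha = 1/2$, and apply Theorem \ref{hold}. That proposition provides a threshold $\omega = \omega(\Lambda,n,\|D^2u\|_{L^\infty(B_1)}) > 0$ (the dependence on the fixed $\alpha$ is absorbed into the other parameters) such that the BMO-smallness hypothesis on $D^2 u$ upgrades the a priori $W^{2,\infty}$ regularity to $D^2 u \in C^{0,\alpha}(B_{3/4})$, with the quantitative estimate \eqref{est1}. In particular $u \in C^{2,\alpha}(B_{3/4})$, and since $a^{ij,kl}$ depends smoothly on the matrix entries, the coefficients $a^{ij,kl}(D^2 u(x))$ of the equation \eqref{eq1} are themselves $C^{0,\alpha}$ functions of $x$ on $B_{3/4}$.

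Second, once a $C^{2,\alpha}$ critical point is at hand, the interior Schauder-type theory for fourth order double divergence equations developed in \cite{BW1} applies. That theory starts from exactly this hypothesis (a $C^{2,\alpha}$ solution of a regular equation of the form \eqref{eq1} with $F$ uniformly convex and smooth) and yields $u \in C^\infty$ on any strictly interior subdomain; taking the subdomain to be $B_{1/2} \Subset B_{3/4}$ gives the conclusion.

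The main point is that no new PDE argument is needed at this stage: Theorem \ref{hold} performs the delicate transition from the borderline BMO regime to the classical Hölder regime, and \cite{BW1} then closes the bootstrap. The only item worth double-checking is that the structural assumptions of the smoothness result in \cite{BW1} match the ones in force here (namely, uniform convexity of $F$, smoothness of $F$ on $U$, and $D^2 u(x) \in U$ almost everywhere), which is immediate from the hypotheses of the corollary.
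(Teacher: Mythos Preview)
Your proposal is correct and follows essentially the same approach as the paper: apply Theorem~\ref{hold} to upgrade to $C^{2,\alpha}$ on $B_{3/4}$, then invoke \cite[Theorem 1.2]{BW1} to bootstrap to smoothness on $B_{1/2}$. One minor wording point: in your final paragraph you phrase the structural check in terms of ``uniform convexity of $F$,'' but Corollary~\ref{main2} is stated for the regular equation \eqref{eq1} (with the ellipticity condition \eqref{Bcondition} on the linearized coefficient $b^{ij,kl}$), not for a functional $F$; this is the hypothesis that matches \cite{BW1}, and the argument goes through as you describe.
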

	\begin{proof}
	    From Proposition \ref{hold} it follows that $u\in C^{2,\alpha}(B_{3/4}).$  Then smoothness follows from \cite[Theorem 1.2]{BW1}. 
	\end{proof}

	\section{Proofs of the main results}
	\label{sec:main}
 In this section, we use the results in Section 3 to prove Theorem \ref{main1} and Corollary \ref{c:dim-est}.

\subsection{Proof of {Proposition \ref{p:higher-int-D3-intro} and} Theorem \ref{main1}}
Using the results of the previous section, in particular, Corollary \ref{main2}, the proof of Theorem \ref{main1} now follows verbatim from \cite[Section 4]{ABfourth}. At the risk of being repetitive, we present the proof below for the sake of completion.

We first wish to demonstrate that the result {in Proposition \ref{Gehring_1}, and therefore Proposition \ref{p:higher-int-D3}, Theorem \ref{hold} and Corollary \ref{main2})}, is not restricted to solutions of {double divergence form equations of the type \eqref{eq1}, and in fact also applies to solutions of more general nonlinear fourth order equations} of the form \eqref{main_0}
for coefficients $F^{ij}$ that are smoothly dependent on the Hessian, as long as uniform ellipticity of its linearization (condition \eqref{Bcondition}) is maintained. In other words, we require the existence of a constant $\Lambda > 0$ such that
\begin{equation}
\frac{\partial F^{ij}}{\partial u_{kl}}(\xi)\sigma_{ij}\sigma_{kl}\geq
\Lambda\left\vert \sigma\right\vert ^{2},\text{ $\forall$ }\sigma\text{ $\in
S^{n\times n}$, $\forall \ \xi\in U$}. \label{Bb}%
\end{equation}
Arguing analogously to that in the preceding section (cf. \eqref{e:linearized}), one can check that the above observation is true by using \eqref{main_0} with a difference quotient test function (in the direction $e_m$) in order to derive the equation
\begin{equation}
\int_{\Omega'} \beta^{ij,kl}u^{h_m}_{ij}\eta_{kl} dx=0 \qquad \forall \eta\in C_0^\infty(\Omega') \label{main3}%
\end{equation}
where $\Omega'\Subset \Omega$, $h\leq d(\Omega',\partial\Omega)$ and
\begin{align*}
    \beta^{ij,kl}(D^2u(x))=\int_{0}^{1}\frac{\partial F^{ij}}{\partial u_{kl}}(D^2u(x)+t[D^2u(x+h_m)-D^2u(x)])dt. 
\end{align*}This shows that the difference quotient of the solution of \eqref{main_0} satisfies an equation of the form \eqref{e:linearized}, which was previously derived from \eqref{eq1}. Since the equation that we work with is the one satisfied by the difference quotient, the conclusion {of Proposition \ref{Gehring_1} indeed holds for solutions of \eqref{main_0}, and therefore so does the conclusion of Proposition \ref{p:higher-int-D3} for such solutions, which immediately yields Proposition \ref{p:higher-int-D3-intro}. In turn, we deduce that} Theorem \ref{hold}, and thus also Corollary \ref{main2}, holds good for solutions of \eqref{main_0}.

We are now in a position to conclude the validity of Theorem \ref{main1} from this. Observe that any critical point of \eqref{Ffunc} solves \eqref{main_0} with $F^{ij} = \frac{\partial F(D^{2}u)}{\partial u_{ij}}$. In light of the above discussion, we may apply Corollary \ref{main2}. If $F$ is uniformly convex, clearly this choice of $F^{ij}$ satisfies condition \eqref{Bb}. If, on the other hand, $F$ is uniformly concave, we may simply replace $F$ with $-F$.\qed

We are now ready to prove the dimension estimate on the singular set in Corollary \ref{c:dim-est}.

\subsection{Proof of Corollary \ref{c:dim-est}}
Let $B_r(x) \subset B_1$. By the Poincar\'{e} inequality, we have
\[
    \frac{1}{r^n}\int_{B_r(x)} \left|D^2 u - (D^2 u)_{B_r(x)}\right|^{p_0} \leq \frac{1}{r^{n-p_0}} \int_{B_r(x)} \left|D^3 u\right|^{p_0}.
\]
Thus, for every $x\in \Sigma(u)$, we have
\[
    \liminf_{r\to 0} \frac{1}{r^{n-p_0}} \int_{B_r(x)} \left|D^3 u\right|^{p_0} > 0.
\]
Now, by Proposition \ref{p:higher-int-D3}, we know that $u\in W^{3,p_0}_{\mathrm{loc}}(B_1)$. Thus, we may apply \cite{GM}*{Proposition 9.21} to conclude the desired dimension estimate.\qed

\section{Hamiltonian Stationary Equations}\label{h_stat}

Hamiltonian stationary Lagrangian submanifolds of the complex Euclidean space are critical points of the volume functional under Hamiltonian variations, and locally they are governed by a fourth order nonlinear elliptic equation, given by
\begin{equation}\label{e:HamStat}
\Delta_g \Theta=0,
\end{equation} 
where $\Delta_g$ is the Laplace-Beltrami operator on the Lagrangian graph $L_u=(x,Du)$. The function $\Theta$ is called the Lagrangian phase or angle of the surface $L_u$ and is defined by
\[
\Theta=\sum_{i=1}^n \arctan\lambda_i
\]
where $\lambda_i$ are the eigenvalues of the Hessian $D^2u$.

Let us describe the analytic setup of the geometric variational problem. For a fixed bounded domain $\Omega\subset \mathbb R^n$, let $u:\Omega\rightarrow \mathbb{R}$ be a smooth function. The gradient graph $L_u=\{(x,Du(x)): x\in \Omega\}$ is a Lagrangian $n$-dimensional submanifold in $\mathbb C^n$, with respect to the complex structure $J$ defined by the complex co-ordinates $z_j=x_j+i y_j$ for $j=1,...,n.$ The volume functional on $L_u$ is given by 

\begin{equation}
\int_{\Omega}\sqrt{\det(I_n+\left(  D^{2}u\right)  ^{2})}\,dx. \label{HS3}
\end{equation}

A function $u\in W^{2,\infty}(\Omega)$ is a critical point of this functional under compactly supported variations if and only if $u$ satisfies the Euler-Lagrange equation \eqref{hstat}.
% \begin{align}
% 	\int_{\Omega}\sqrt{\det g}g^{ij}\delta^{kl}u_{ik}\eta_{jl}dx=0
% 	\text{     }\forall\eta   \in C_{0}^{\infty}(\Omega) \label{hstat}
% 	\end{align}
% 	where $g=I_n+\left(  D^{2}u\right)  ^{2}$ is the induced metric from the Euclidean metric in $\mathbb C^{n}$.
This is also known as the variational Hamiltonian stationary equation.  
If the Hessian of the potential $u$ is bounded by a small dimensional constant, then the variational equation \eqref{hstat} is equivalent to the geometric Hamiltonian stationary equation (\ref{e:HamStat}) \cite[Theorem 1.1]{ChenWarren} (also see \cite{Oh, SW03}).

In $\mathbb{C}^{n}$ with the standard K\"ahler structure, the above 
expression for $\Theta$ is available for a local graphical
representation $L_u$ as above. This decomposition feature of the fourth order elliptic
operator into a composition of two second order elliptic operators as in \eqref{e:HamStat} is essential in the work of
Chen-Warren \cite{CW}, in which it is shown that a $C^{1}$-regular Hamiltonian
stationary Lagrangian submanifold in $\mathbb{C}^{n}$ is real analytic.
However, it is rather difficult to apply the same strategy on a Calabi-Yau manifold other than $\mathbb{C}^{n}$. Even while $\Theta$ is still locally well-defined on 
$\Omega$, it can no longer be written in a clean form as a sum of
arctangent functions, even when representing $L_u$ as a gradient graph in a Darboux coordinate chart. In \cite{BCW}, the authors found that directly dealing with a critical point of
the volume of $L_u$ in an open ball $B\subset\mathbb{R}^{2n}$ equipped
with a Riemannian metric, among nearby competing gradient graphs $L_{u_t}=\{(x, Du(x)
+t D\eta(x)):x\in\Omega\}$ for compactly supported smooth functions $\eta$ is helpful. This leads the authors to
study a class of fourth order nonlinear equations (cf. \cite{BCW}*{(1.1)}) sharing a similar structure with \eqref{eq1}. It is worth noting that in general, equations of the form \eqref{eq1} do not necessarily factor into a composition of second order operators, thus motivating the study of such fourth order equations. 

{Let us now prove Corollary \ref{c:HS-reg}, which is an important application of our main result in Theorem \ref{main1}. }
% As an application of the main result in Theorem \ref{main1}, we state the following corollary: 
% \begin{corollary}\label{c:HS-reg}
%     Let $\eta \in (0,1)$. Suppose that $u\in W^{2,\infty}(B_1)$ is a critical point of \eqref{HS3} in $B_1$ and $\|D^2u\|_{{L^\infty}(B_1)}\leq 1-\eta$. There exists $\omega(n,\eta)>0$ such that if $D^2u\in\BMO(B_1)$ with modulus $\omega$, then $u$ is smooth in $B_{1/2}$ with interior H\"{o}lder estimates of all orders.
% \end{corollary}

\subsection{Proof of Corollary \ref{c:HS-reg}}
    We show that when $\|D^2u\|_{{L^\infty}(B_1)}\leq 1-\eta$, the area functional \eqref{HS3} is uniformly convex. Then the result follows immediately from Theorem \ref{main1}.

    Suppose that the Hessian $D^2u$ is diagonalized at $x_0$, with eigenvalues $\lambda_1,\dots,\lambda_n$. We may thus write the  area integrand at $x_0$ as
    \[
    V(D^2 u(x_0)) \equiv V(\lambda_1,\dots,\lambda_n)=\Bigg[(1+\lambda_1^2)...(1+\lambda_n^2)\bigg]^{\frac{1}{2}}.
    \]

We compute
\begin{align*}
    \partial_i V&=\frac{\lambda_i}{1+\lambda_i^2}V\\
    \partial_{ii} V&=\frac{1}{(1+\lambda_i^2)^2}V\\
    \partial_{ij}V&=\frac{\lambda_j\lambda_i}{(1+\lambda_j^2)(1+\lambda_i^2)}V \text{ for $i\neq j$}.
\end{align*}
Denoting $e_i=\frac{\lambda_i}{1+\lambda_i^2}$, we re-write
\begin{align*}
    \partial_{ii} V=\bigg(\frac{1}{1+\lambda_i^2}-2e_i^2\bigg)V +e_i^2V\\
    \partial_{ij}V=Ve_ie_j  \text{ for $i\neq j$}.
\end{align*}
Hence, %\note{I'm not sure I see how you got the lower bound here - probably I just need to stare at it a little longer}
 \begin{align*}
     \frac{V_{ij}}{V}=e_ie_j+\delta_{ij}\bigg(\frac{1}{1+\lambda_i^2}-2e_i^2\bigg)
     \geq \frac{1-\lambda_i^2}{(1+\lambda_i^2)^2}\geq C(\eta)
\end{align*}
where the last lower bound follows from using the fact that $\|D^2u\|_{L^\infty}\leq 1-\eta$.\qed

\begin{remark}
    Note that the result in \cite[Theorem 1.1, Theorem 1.2]{ChenWarren} reaches a similar conclusion by considering solutions $u\in C^{1,1}(B_1)$ of the geometric equation (\ref{e:HamStat}) but under the assumption that there exists a $c(n)>0$ for which $\|u\|_{C^{1,1}(B_1)}\leq c(n)$.
\end{remark}

\subsection{A priori $W^{2,\infty}$ assumption}\label{ss:W2infty}
     We use the Hamiltonian stationary equation and the area functional to {further elaborate on the a priori regularity assumption $u\in W^{2,\infty}(B_1)$, and attempt to convince the reader that the question of whether or not this assumption is natural is a delicate one.}
     %show that it cannot easily be weakened, without making additional assumptions on $u$. 
     
     Let $u$ be a solution of \eqref{hstat}. It is currently known that \eqref{hstat} is equivalent to \eqref{e:HamStat} \emph{if} the $C^{1,1}$ norm of $u$ is sufficiently small, see \cite{ChenWarren}. This in itself demonstrates the importance of the $W^{2,\infty}$ assumption in our results, since equation \eqref{e:HamStat} has a more favorable structure than \eqref{hstat}, in the sense that the fourth order operator in \eqref{e:HamStat} factors into two second order operators. In addition, the $W^{2,\infty}$-regularity is necessary to make sense of the additional requirement $\|D^2 u\|_{L^\infty}\leq 1-\eta$, which is the sufficient condition demonstrated here for the uniform convexity of the functional \eqref{HS3}.
     
     On the other hand, if one directly assumes that $u$ is a weak solution of the Hamiltonian stationary equation \eqref{e:HamStat} in $B_1$, then using integration by parts, for any test function $\eta\in C^{\infty}_0(B_1)$, we may write: 
     \begin{align*}
         \int_{B_1}\Theta \Delta_g\eta \,d\mu_g=0.
     \end{align*}
     Recall that the Laplace-Beltrami operator of the metric $g$ on $L_u$ is given by:
\begin{align*}
\Delta_g  &=\frac{1}{\sqrt{ g}}\partial_i(\sqrt{ g}g^{ij}\partial_j ) \\
&=g^{ij}\partial_{ij}+\frac{1}{\sqrt{g}}\partial_i(\sqrt{g}g^{ij})\partial_j \label{2!}\\
&=g^{ij}\partial_{ij}-g^{jp}\Theta_q u_{pq} \partial_j. \nonumber
\end{align*}
We re-write the distributional equation as 
\begin{align*}
         \int_{B_1}\Theta \bigg(g^{ij}\partial_{ij}\eta-g^{jp}\Theta_q u_{pq} \partial_j\eta\bigg)\,d\mu_g=0 .
\end{align*}
For the above equation to be well-defined for $u\in W^{2,p}(B_1)$, with $1\leq p < \infty$, we require $\Theta \in W^{1,p'}(B_1)$, where $p'$ is the H\"{o}lder dual of $p$, satisfying $\frac{1}{p}+\frac{1}{p'}=1$. However, $\Theta \in W^{1,p'}(B_1)$ requires
$u$ to necessarily have more a priori regularity than merely $W^{2,p}$. It may not be necessary to require that $u\in W^{3,p'}(B_1)$ a priori, but this nevertheless imposes a nonlinear anisotropic third order integrability constraint on $u$.

It remains possible that when considering a critical point of \eqref{hstat}, one may relax the assumption of $W^{2,\infty}$ to $W^{2,2}$, for example. This, however, would require a weaker sufficient condition than the hypothesis $\|D^2 u\|_{L^\infty}\leq 1-\eta$ given in Corollary \ref{c:HS-reg}, in order to deduce uniform convexity of the functional \eqref{HS3}. 

{By comparison to vectorial solutions to the classical area functional, recalling that the functional \eqref{HS3} is precisely the area functional for gradient graphs $x\mapsto (x,Du(x))$, we notice that Lipschitz regularity is assumed a priori in some settings, for instance in the context of the Lawson-Osserman conjecture (see \cite{LO,HMT}), and is exploited in the arguments therein, suggesting that for some results it is a necessary hypothesis. Furthermore, counterexamples to regularity for vectorial critical points of (quasiconvex) gradient-dependent energy functionals, are often constructed via convex integration techniques, yielding Lipschitz regularity, but failure of $C^1$ regularity at singular points (see e.g. \cite{Muller-Sverak}). Despite the fact that such counterexamples \emph{cannot} be produced for uniformly convex energies, this still loosely suggests that the $W^{2,\infty}$ assumption may be natural in our setting, since the analogues of these constructions herein should yield this regularity. On the other hand, the work \cite{Sverak-Yan} demonstrates the existence of $W^{1,2}$ minimizers of uniformly convex gradient-dependent energy functionals that are not Lipschitz. If it were possible to construct analogous examples that are gradients of a scalar function, this would therefore show that the $W^{2,\infty}$ assumption, although natural, is indeed restrictive in our context.}
%for which there exist critical points that are not smooth in sufficiently high dimensions,  a uniform bound on the Hessian appears to be a reasonable condition to impose to guarantee uniform convexity, without which one does not typically expect regularity for critical points.

	\bibliographystyle{amsalpha}
	\begin{bibdiv}
		\begin{biblist}
			
			\bib{Abreu}{article}{
				author={Abreu, Miguel},
				title={K\"{a}hler geometry of toric varieties and extremal metrics},
				date={1998},
				ISSN={0129-167X},
				journal={Internat. J. Math.},
				volume={9},
				number={6},
				pages={641\ndash 651},
				url={https://doi.org/10.1142/S0129167X98000282},
				review={\MR{1644291}},
			}
			
			\bib{Allard}{article}{
				author={Allard, William~K.},
				title={On the first variation of a varifold},
				date={1972},
				ISSN={0003-486X},
				journal={Ann. of Math. (2)},
				volume={95},
				pages={417\ndash 491},
				url={https://doi.org/10.2307/1970868},
				review={\MR{307015}},
			}
			
			\bib{AG}{article}{
				author={Aviles, Patricio},
				author={Giga, Yoshikazu},
				title={On lower semicontinuity of a defect energy obtained by a singular
					limit of the {G}inzburg–{L}andau type energy for gradient fields},
				date={1999},
				journal={Proceedings of the Royal Society of Edinburgh: Section A
					Mathematics},
				volume={129},
				number={1},
				pages={1–17},
			}
			
			\bib{ABfourth}{article}{
				author={Bhattacharya, Arunima},
				title={Regularity for critical points of convex functionals on {H}essian
					spaces},
				date={2022},
				journal={Proceedings of the American Mathematical Society},
				volume={150},
				number={12},
				pages={5217\ndash 5230},
			}
			
			\bib{BCW}{article}{
				author={Bhattacharya, Arunima},
				author={Chen, Jingyi},
				author={Warren, Micah},
				title={Regularity of {H}amiltonian stationary equations in symplectic
					manifolds},
				date={2023},
				journal={Advances in Mathematics},
				volume={424},
				pages={109059},
			}
			
			\bib{BS4th}{article}{
				author={Bhattacharya, Arunima},
				author={Skorobogatova, Anna},
				date={2023},
				journal={In preparation},
			}
			
			\bib{BW1}{article}{
				author={Bhattacharya, Arunima},
				author={Warren, Micah},
				title={{Regularity Bootstrapping for 4th-order Nonlinear Elliptic
						Equations}},
				date={201901},
				journal={International Mathematics Research Notices},
				volume={2021},
				number={6},
				pages={4324\ndash 4348},
				eprint={https://academic.oup.com/imrn/article-pdf/2021/6/4324/36604885/rny301.pdf},
				url={https://doi.org/10.1093/imrn/rny301},
			}
			
			\bib{ChenWarren}{article}{
				author={{Chen}, Jingyi},
				author={{Warren}, Micah},
				title={{On the Regularity of Hamiltonian Stationary Lagrangian
						manifolds}},
				date={2016-11},
				journal={ArXiv e-prints},
				eprint={1611.02641},
			}
			
			\bib{CW}{article}{
				author={Chen, Jingyi},
				author={Warren, Micah},
				title={On the regularity of {H}amiltonian stationary {L}agrangian
					submanifolds},
				date={2019},
				ISSN={0001-8708},
				journal={Advances in Mathematics},
				volume={343},
				pages={316 \ndash  352},
				url={http://www.sciencedirect.com/science/article/pii/S0001870818304845},
			}
			
			\bib{MR2154300}{article}{
				author={Donaldson, S.~K.},
				title={Interior estimates for solutions of {A}breu's equation},
				date={2005},
				ISSN={0010-0757},
				journal={Collect. Math.},
				volume={56},
				number={2},
				pages={103\ndash 142},
				review={\MR{2154300}},
			}
			
			\bib{DongKimARMA}{article}{
				author={Dong, Hongjie},
				author={Kim, Doyoon},
				title={On the l p-solvability of higher order parabolic and elliptic
					systems with bmo coefficients},
				date={2011},
				journal={Archive for rational mechanics and analysis},
				volume={199},
				number={3},
				pages={889\ndash 941},
			}
			
			\bib{Driver03}{article}{
				author={Driver, Bruce~K},
				title={Analysis tools with applications},
				date={2003},
				journal={Lecture notes},
			}
			
			\bib{Evans82}{article}{
				author={Evans, Lawrence~C},
				title={Classical solutions of fully nonlinear, convex, second-order
					elliptic equations},
				date={1982},
				journal={Communications on Pure and Applied Mathematics},
				volume={35},
				number={3},
				pages={333\ndash 363},
			}
			
			\bib{Folland}{book}{
				author={Folland, Gerald~B.},
				title={Introduction to partial differential equations},
				edition={Second},
				publisher={Princeton University Press, Princeton, NJ},
				date={1995},
				ISBN={0-691-04361-2},
				review={\MR{1357411}},
			}
			
			\bib{gagli}{article}{
				author={Gagliardo, Emilio},
				title={Ulteriori propriet{\`a} di alcune classi di funzioni in pi{\`u}
					variabili},
				date={1959},
				journal={Ricerche Mat.},
				volume={8},
				pages={24\ndash 51},
			}
			
			\bib{GiaMod}{article}{
				author={Giaquinta, M.},
				author={Modica, G.},
				title={Regularity results for some classes of higher order non linear
					elliptic systems.},
				date={1979},
				journal={Journal für die reine und angewandte Mathematik},
				volume={0311\_0312},
				pages={145\ndash 169},
				url={http://eudml.org/doc/152169},
			}
			
			\bib{GM}{book}{
				author={Giaquinta, Mariano},
				author={Martinazzi, Luca},
				title={An introduction to the regularity theory for elliptic systems,
					harmonic maps and minimal graphs},
				publisher={Springer Science \& Business Media},
				date={2013},
			}
			
			\bib{CGY}{article}{
				author={Gursky, Matthew},
				author={Yang, Paul},
				author={Chang, Sun-Yung},
				title={Regularity of a fourth order nonlinear pde with critical
					exponent},
				date={199904},
				journal={American Journal of Mathematics},
				volume={121},
				pages={215\ndash 257},
			}
			
			\bib{HanLin}{book}{
				author={Han, Qing},
				author={Lin, Fanghua},
				title={Elliptic partial differential equations},
				series={Courant Lecture Notes in Mathematics},
				publisher={New York University, Courant Institute of Mathematical Sciences,
					New York; American Mathematical Society, Providence, RI},
				date={1997},
				volume={1},
				ISBN={0-9658703-0-8; 0-8218-2691-3},
				review={\MR{1669352}},
			}
			
			\bib{han}{book}{
				author={Han, Qing},
				author={Lin, Fanghua},
				title={Elliptic partial differential equations},
				publisher={American Mathematical Soc.},
				date={2011},
				volume={1},
			}
			
			\bib{HMT}{misc}{
				author={Hirsch, Jonas},
				author={Mooney, Connor},
				author={Tione, Riccardo},
				title={On the lawson-osserman conjecture},
				date={2023},
				url={https://arxiv.org/abs/2308.04997},
			}
			
			\bib{KV}{article}{
				author={Jin, Weimin},
				author={Kohn, Robert~V},
				title={Singular perturbation and the energy of folds},
				date={2000},
				journal={Journal of Nonlinear Science},
				volume={10},
				number={3},
				pages={355\ndash 390},
			}
			
			\bib{KristensenMingione}{article}{
				author={Kristensen, Jan},
				author={Mingione, Giuseppe},
				title={The singular set of minima of integral functionals},
				date={2006},
				ISSN={0003-9527},
				journal={Arch. Ration. Mech. Anal.},
				volume={180},
				number={3},
				pages={331\ndash 398},
				url={https://doi.org/10.1007/s00205-005-0402-5},
				review={\MR{2214961}},
			}
			
			\bib{Krylov}{article}{
				author={Krylov, Nikolai~Vladimirovich},
				title={Boundedly nonhomogeneous elliptic and parabolic equations},
				date={1983},
				journal={Mathematics of the USSR-Izvestiya},
				volume={20},
				number={3},
				pages={459\ndash 492},
			}
			
			\bib{Krylov2}{article}{
				author={Krylov, Nikolai~Vladimirovich},
				title={Boundedly nonhomogeneous elliptic and parabolic equations in a
					domain},
				date={1984},
				journal={Mathematics of the USSR-Izvestiya},
				volume={22},
				number={3},
				pages={67\ndash 97},
			}
			
			\bib{LO}{article}{
				author={Lawson, H~Blaine},
				author={Osserman, Robert},
				title={Non-existence, non-uniqueness and irregularity of solutions to
					the minimal surface system},
				date={1977},
				journal={Acta Mathematica},
				volume={139},
				number={1},
				pages={1\ndash 17},
			}
			
			\bib{MR3437587}{article}{
				author={Le, Nam~Q.},
				title={{$W^{4,p}$} solution to the second boundary value problem of the
					prescribed affine mean curvature and {A}breu's equations},
				date={2016},
				ISSN={0022-0396},
				journal={J. Differential Equations},
				volume={260},
				number={5},
				pages={4285\ndash 4300},
				url={https://doi.org/10.1016/j.jde.2015.11.013},
				review={\MR{3437587}},
			}
			
			\bib{Ma-Trudinger-Wang}{article}{
				author={Ma, Xi-Nan},
				author={Trudinger, Neil~S.},
				author={Wang, Xu-Jia},
				title={Regularity of potential functions of the optimal transportation
					problem},
				date={2005},
				ISSN={0027-8424,1091-6490},
				journal={Arch. Ration. Mech. Anal.},
				volume={177},
				number={2},
				pages={151\ndash 183},
				url={https://doi.org/10.1007/s00205-005-0362-9},
			}
			
			\bib{Muller-Sverak}{article}{
				author={Müller, S.},
				author={Šverák, V.},
				title={Convex integration for lipschitz mappings and counterexamples to
					regularity},
				date={2003},
				ISSN={0003486X},
				journal={Annals of Mathematics},
				volume={157},
				number={3},
				pages={715\ndash 742},
				url={http://www.jstor.org/stable/3597271},
			}
			
			\bib{MonSch}{article}{
				author={Mondino, Andrea},
				author={Scharrer, Christian},
				title={Existence and regularity of spheres minimising the
					{C}anham{\textendash}{H}elfrich energy},
				date={2020},
				journal={Archive for Rational Mechanics and Analysis},
				volume={236},
				number={3},
				pages={1455\ndash 1485},
				url={https://doi.org/10.1007/s00205-020-01497-4},
			}
			
			\bib{nir}{article}{
				author={Nirenberg, Louis},
				title={An extended interpolation inequality},
				date={1966},
				journal={Annali Della Scuola Normale Superiore di Pisa-Classe di Scienze},
				volume={20},
				number={4},
				pages={733\ndash 737},
			}
			
			\bib{Oh}{article}{
				author={Oh, Yong-Geun},
				title={Volume minimization of {L}agrangian submanifolds under
					{H}amiltonian deformations},
				date={1993},
				ISSN={0025-5874},
				journal={Math. Z.},
				volume={212},
				number={2},
				pages={175\ndash 192},
				url={http://dx.doi.org/10.1007/BF02571651},
				review={\MR{1202805 (94a:58040)}},
			}
			
			\bib{P}{article}{
				author={Paneitz, Stephen~M.},
				title={A quartic conformally covariant differential operator for
					arbitrary pseudo-{R}iemannian manifolds (summary)},
				date={2008},
				journal={SIGMA Symmetry Integrability Geom. Methods Appl.},
				volume={4},
				pages={Paper 036, 3},
				url={https://doi.org/10.3842/SIGMA.2008.036},
				review={\MR{2393291}},
			}
			
			\bib{MR3409718}{book}{
				author={Santambrogio, Filippo},
				title={Optimal transport for applied mathematicians},
				series={Progress in Nonlinear Differential Equations and their
					Applications},
				publisher={Birkh\"{a}user/Springer, Cham},
				date={2015},
				volume={87},
				ISBN={978-3-319-20827-5; 978-3-319-20828-2},
				url={https://doi.org/10.1007/978-3-319-20828-2},
				note={Calculus of variations, PDEs, and modeling},
				review={\MR{3409718}},
			}
			
			\bib{SW03}{incollection}{
				author={Schoen, Richard},
				author={Wolfson, Jon},
				title={The volume functional for {L}agrangian submanifolds},
				date={2003},
				booktitle={Lectures on partial differential equations},
				series={New Stud. Adv. Math.},
				volume={2},
				publisher={Int. Press, Somerville, MA},
				pages={181\ndash 191},
				review={\MR{2055848}},
			}
			
			\bib{SimonETH}{book}{
				author={Simon, Leon},
				title={Theorems on regularity and singularity of energy minimizing
					maps},
				series={Lectures in Mathematics ETH Z\"{u}rich},
				publisher={Birkh\"{a}user Verlag, Basel},
				date={1996},
				ISBN={3-7643-5397-X},
				url={https://doi.org/10.1007/978-3-0348-9193-6},
				note={Based on lecture notes by Norbert Hungerb\"{u}hler},
				review={\MR{1399562}},
			}
			
			\bib{Sverak-Yan}{article}{
				author={Sver\'ak, Vladim\'ir},
				author={Yan, Xiaodong},
				title={Non-{L}ipschitz minimizers of smooth uniformly convex
					functionals},
				date={2002},
				ISSN={0027-8424,1091-6490},
				journal={Proc. Natl. Acad. Sci. USA},
				volume={99},
				number={24},
				pages={15269\ndash 15276},
				url={https://doi.org/10.1073/pnas.222494699},
				review={\MR{1946762}},
			}
			
			\bib{MR2137978}{article}{
				author={Trudinger, Neil~S.},
				author={Wang, Xu-Jia},
				title={The affine {P}lateau problem},
				date={2005},
				ISSN={0894-0347},
				journal={J. Amer. Math. Soc.},
				volume={18},
				number={2},
				pages={253\ndash 289},
				url={https://doi.org/10.1090/S0894-0347-05-00475-3},
				review={\MR{2137978}},
			}
			
		\end{biblist}
	\end{bibdiv}

\end{document}